\numberwithin{equation}{section}   
\title{\bf A Global Maximum Principle for the Stochastic Optimal Control Problem with Delay \thanks{This work is financially supported by the National Key R\&D Program of China (2018YFB1305400), and the National Natural Science Foundations of China (11971266, 11571205, 11831010).}}
\author{\normalsize Weijun Meng\thanks{\it School of Mathematics, Shandong University, Jinan 250100, P.R. China, E-mail: 201611337@mail.sdu.edu.cn}, Jingtao Shi\thanks{\it Corresponding author, School of Mathematics, Shandong University, Jinan 250100, P.R. China, E-mail: shijingtao@sdu.edu.cn}}
\newtheorem{mypro}{Proposition}[section]
\newtheorem{mythm}{Theorem}[section]
\newtheorem{mylem}{Lemma}[section]
\newtheorem{Remark}{Remark}[section]
\begin{document}
\maketitle

\noindent{\bf Abstract:}\quad In this paper, an open problem is solved, for the stochastic optimal control problem with delay where the control domain is nonconvex and the diffusion term contains both control and its delayed term. Inspired by previous results by \O ksendal and Sulem [{\it A maximum principle for optimal control of stochastic systems with delay, with applications to finance. In J. M. Menaldi, E. Rofman, A. Sulem (Eds.), Optimal control and partial differential equations, ISO Press, Amsterdam, 64-79, 2000}] and Chen and Wu [{\it Maximum principle for the stochastic optimal control problem with delay and application, Automatica, 46, 1074-1080, 2010}], Peng's general stochastic maximum principle [{\it A general stochastic maximum principle for optimal control problems, SIAM J. Control Optim., 28, 966-979, 1990}] is generalized to the time delayed case, which is called the global maximum principle. A new backward random differential equation is introduced to deal with the cross terms, when applying the duality technique. Comparing with the classical result, the maximum condition contains an indicator function, in fact it is the characteristic of the stochastic optimal control problem with delay. The multi-dimensional case and a solvable linear-quadratic example are also discussed.

\vspace{2mm}

\noindent{\bf Keywords:}\quad Stochastic optimal control; stochastic differential delay equations; anticipated backward stochastic differential equation; maximum principle; linear-quadratic control

\vspace{2mm}

\noindent{\bf Mathematics Subject Classification:}\quad 93E20, 60H10, 34K50

\section{Introduction}

The study of stochastic optimal control problems has been an important topic in recent years. As one of the the main tools, Pontryagin's type maximum principle has drawn special attention to the researchers. Since Fleming \cite{Fleming69}, a lot of results have been obtained (see \cite{Kushner72}, \cite{Bismut78}, \cite{Ben82}, \cite{Peng90}, \cite{YZ99}). For stochastic optimal control problems with It\^o's type {\it stochastic differential equations} (SDEs for short), Kushner \cite{Kushner72} and Bismut \cite{Bismut78} proposed the definition of the adjoint processes. Bensoussan \cite{Ben82} gave the local maximum principle where either the control domain is convex or the diffusion term contains control variable with nonconvex control domain. Until 1990, Peng \cite{Peng90} proved the global maximum principle for the stochastic optimal control problem, applying the second-order spike variational technique of the optimal control and the second-order adjoint {\it backward stochastic differential equation} (BSDE for short). Systematically, Yong and Zhou \cite{YZ99} summarized these context.

In the above results, the states of the controlled systems only depend on the value of the current time. However, the development of some random phenomena in the real world depends not only on their current value, but also on their past history. Such past-dependence characteristic should be characterized by SDEs depending on the past, which are called {\it stochastic differential delay equations} (SDDEs for short). More detailed research about SDDEs can be referred to Mohammed \cite{Mohammed84,Mohammed98}, Mao \cite{Mao97}, and Kushner \cite{Kushner08}. Due to their wide applications in information science, engineering and finance (see \cite{OS00}, \cite{AHMP07}, \cite{KSW07}, \cite{CW10}, \cite{Fed11}, \cite{MS13}, \cite{SZ14}, \cite{ZXZ20}, etc.), the study of SDDEs has become a hot issue in modern research.

In the meanwhile, the delayed response brings many difficulties to study the stochastic control problems with delay, not only the corresponding problems become infinite dimensional ones, but also tools such as It\^o's formula to deal with the delay terms are absence so far. \O ksendal and Sulem \cite{OS00} studied a class of stochastic optimal control problem with delay. In their model, not only the current value but also the average value of the past duration will affect the growth of wealth at current time. Due to the particularity of the selected models, they could reduce the infinite dimensional problem into a finite dimensional one and obtain the sufficient maximum principle. Chen and Wu \cite{CW10} discussed the stochastic control system involving both delays in the state variable and the control variable with convex control domain, and they derived the local maximum principle by an {\it anticipated backward stochastic differential equation} (ABSDE for short) as the adjoint equation, where the ABSDE was first introduced by Peng and Yang \cite{PY09} in 2009. Recent progress for stochastic optimal control problems with delay, please refer to \cite{OSZ11}, \cite{Yu12}, \cite{CWY12}, \cite{DHQ13}, \cite{AHOP13}, \cite{ZZ15}, \cite{WW15}, \cite{ZX17}, \cite{WW18}, \cite{GM18}, \cite{LWW18}, \cite{Xu20} and the references therein.

Inspired by \cite{Peng90}, \cite{OS00} and \cite{CW10}, in this paper we derived the global maximum principle for a stochastic optimal control problem with delay. In our model, the drift and diffusion terms of the controlled system, and the running cost term of the cost functional could contain both the delay terms of the state and control variables, and the control domain is nonconvex. The contribution and innovation of this paper can be summarized as follows.

$\bullet$ The model in this paper is general, which covers the following special cases in the literatures: (1) Problems without delay, the control domain is convex and the diffusion term is control dependent (see Bensoussan \cite{Ben82}). (2) Problems without delay, the control domain is nonconvex and the diffusion term is control independent (see Bensoussan \cite{Ben82}). (3) Problems without delay, the control domain is nonconvex (see Peng \cite{Peng90}, Yong and Zhou \cite{YZ99}). (4) Problems with delay, the control domain is convex, the diffusion term is both control and control delay terms dependent, and the running cost term is state delay term independent (see Chen and Wu \cite{CW10}). (5) Problems with delay, the control domain is nonconvex, the drift, diffusion and running cost terms are control delay terms independent (see Guatteri and Masiero \cite{GM18}). It is worth to point that the authors of \cite{GM18} themselves declare that there is a mistake in their paper.

$\bullet$ The problem is open for over a decade, since there exist fatal, technical difficulties when applying the spike variational method of Peng \cite{Peng90} to the problem with delay. In fact, the key difficulty is how to deal with the cross term of state and state delay term appearing in the variational inequality. In this paper, we overcome the difficulty and solve the problem by introducing a new adjoint process $K(\cdot)$ which satisfies a {\it backward random differentia equation} (BRDE for short) (see (\ref{BRDE}) or (\ref{BRDE-multi})), which plays an important role in obtaining the indispensable estimation (see (\ref{variational inequality--}) or (\ref{variational inequality---multi})). Comparing with some existing results, the maximum condition contains an indicator function (see (\ref{main result}) or (\ref{main result-multi})), which is a natural characteristic of the stochastic optimal control problem with delay.

$\bullet$ Some estimations are given for the solution to the SDDEs, for the completeness of the content, which are supplement of existing results in the literature.

The rest of this paper is organized as follows. In Section 2, some preliminary results concerning the SDDEs and ABSDEs are presented. In Section 3, the stochastic optimal control problem with delay is formulated and the variational inequality is given. Section 4 mainly focuses on the adjoint equations and the global maximum principle. In section 5, the corresponding results are extended to the multi-dimensional case. In section 6, an {\it linear quadratic} (LQ for short) example is solved using the previous results. Finally, some concluding remarks are given in Section 7.

\section{Preliminaries}

In this section, we first present some preliminary results concerning the SDDE and ABSDE.

Suppose that $(\Omega,\mathcal{F},\{\mathcal{F}_t\},\mathbb{P})$ is a complete filtered probability space and $\{\mathcal{F}_t\}_{t\geq 0}$ is generated by the $d$-dimensional standard Brownian motion $\{B(t)\}_{t\geq0}$. Let $\mathbb{E}$ denote the mathematical expectation with respect to the probability $\mathbb{P}$ and $T>0$ be a given finite time duration.

For some Euclidean space $\mathbf{R}^n$, we first define some spaces which will be used later:
\begin{eqnarray*}\begin{aligned}
C([0,T];\mathbf{R}^n)&:=\Big\{\mathbf{R}^n\mbox{-valued continuous funciton }\phi(t);\sup\limits_{0\leq t\leq T}|\phi(t)|<\infty\Big\},\\
L^2([0,T];\mathbf{R}^n)&:=\Big\{\mathbf{R}^n\mbox{-valued funciton }\phi(t);\int_0^T|\phi(t)|^2<\infty\Big\},\\
L^2(\mathcal{F}_t;\mathbf{R}^n)&:=\Big\{\mathbf{R}^n\mbox{-valued } \mathcal{F}_t\mbox{-measurable random variable }\xi;\mathbb{E}|\xi(t)|^2<\infty\Big\},\ t\in[0,T],\\
\end{aligned}\end{eqnarray*}
\begin{eqnarray*}\begin{aligned}
L^2_\mathcal{F}([0,T];\mathbf{R}^n)&:=\Big\{\mathbf{R}^n\mbox{-valued }\mathcal{F}_t\mbox{-adapted process }\phi(t)\equiv
   \phi(t,\omega);\mathbb{E}\int_0^T|\phi(t)|^2dt<\infty\Big\},\\
S^2_\mathcal{F}([0,T];\mathbf{R}^n)&:=\Big\{\mathbf{R}^n\mbox{-valued }\mathcal{F}_t\mbox{-adapted process }\phi(t)\equiv
   \phi(t,\omega);\mathbb{E}\Big[\sup\limits_{0\leq t\leq T}|\phi(t)|^2\Big]<\infty\Big\}.
\end{aligned}\end{eqnarray*}

Consider the following SDDE:
\begin{eqnarray}\left\{\begin{aligned}\label{SDDE}
   dX(t)&=b(t,X(t),X(t-\delta))dt+\sigma(t,X(t),X(t-\delta))dB(t),\ t\geq 0,\\
    X(t)&=\varphi(t),\ t\in[-\delta,0],
\end{aligned}\right.\end{eqnarray}
where $\delta>0$ is a given finite time delay, $\varphi\in C([-\delta,0];\textbf{R})$ is the given initial path of the state $X(\cdot)$. $b:[0,T]\times\mathbf{R}^n\times\mathbf{R}^n\rightarrow\mathbf{R}^n,\sigma:[0,T]\times\mathbf{R}^n\times\mathbf{R}^n\rightarrow\mathbf{R}^{n\times d}$ are given functions satisfying:

$(\textbf{H1})\left|b\left(t,x,x^\prime\right)-b\left(t,y,y^\prime\right)\right|+\left|\sigma\left(t,x,x^\prime\right)-\sigma\left(t,y,y^\prime\right)\right|\leq D(|x-y|+|x^\prime-y^\prime|),\ \forall x,x^\prime,y,y^\prime\in\mathbf{R}^n,\ t\in[0,T]$, for some constant $D>0$;

$(\textbf{H2})\sup_{0\leq t\leq T}(|b(t,0,0)|+|\sigma(t,0,0)|)<+\infty$.

\vspace{1mm}

By Theorem 2.2 of Chen and Wu \cite{CW10}, we have the following result.

\begin{mypro}\label{pro2.1}
Suppose (\textbf{H1}) and (\textbf{H2}) hold, let $\varphi:\Omega\rightarrow C([-\delta,0];\mathbf{R}^n)$ is $\mathcal{F}_0$-measurable and $\mathbb{E}\Big[\sup\limits_{-\delta\leq t\leq0}|\varphi(t)|^2\Big]<\infty$. Then SDDE (\ref{SDDE}) admits a unique continuous $\mathcal{F}_t$-adapted solution $X(\cdot)\in\mathcal{S}^2_\mathcal{F}([0,T];\mathbf{R}^n)$.
\end{mypro}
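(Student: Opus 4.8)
The plan is to prove well-posedness by the \emph{method of steps}, which converts the delayed equation into a finite sequence of standard It\^o SDEs, each posed on an interval of length $\delta$ on which the delayed argument has already been determined. Since $T$ is finite, only $N:=\lceil T/\delta\rceil$ steps are needed, so the apparent infinite-dimensionality of the problem never actually materializes.

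First, on $[0,\delta]$ the delayed state is $X(t-\delta)=\varphi(t-\delta)$, a prescribed $\mathcal{F}_0$-measurable (hence $\mathcal{F}_t$-adapted) input. Thus (\ref{SDDE}) reduces on $[0,\delta]$ to the classical SDE
\[
dX(t)=\tilde b(t,X(t))\,dt+\tilde\sigma(t,X(t))\,dB(t),\qquad X(0)=\varphi(0),
\]
with random coefficients $\tilde b(t,x):=b(t,x,\varphi(t-\delta))$ and $\tilde\sigma(t,x):=\sigma(t,x,\varphi(t-\delta))$. By (\textbf{H1}) these are uniformly Lipschitz in $x$ with constant $D$, and combining (\textbf{H1}), (\textbf{H2}) with $\mathbb{E}[\sup_{-\delta\le t\le0}|\varphi(t)|^2]<\infty$ yields the linear growth bound $|\tilde b(t,x)|+|\tilde\sigma(t,x)|\le C(1+|x|+|\varphi(t-\delta)|)$, whose last term is square-integrable. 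The classical existence–uniqueness theorem for It\^o SDEs with adapted Lipschitz coefficients then gives a unique solution $X\in S^2_\mathcal{F}([0,\delta];\mathbf{R}^n)$.

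Next I would proceed by induction on the steps. Suppose a unique solution has been constructed on $[0,k\delta]$, $1\le k\le N-1$. For $t\in[k\delta,(k+1)\delta]$ the delayed argument $X(t-\delta)$ ranges over $[(k-1)\delta,k\delta]$, where $X$ is already known; moreover $X(t-\delta)$ is $\mathcal{F}_{t-\delta}\subseteq\mathcal{F}_t$-measurable, so it serves as a genuinely adapted input and introduces no anticipation. Repeating the reduction, (\ref{SDDE}) is again a standard SDE on $[k\delta,(k+1)\delta]$ with Lipschitz, linearly growing coefficients and square-integrable initial datum $X(k\delta)$, so classical SDE theory provides a unique $S^2$ solution there. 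Concatenating the pieces, which agree at the shared endpoints by continuity, produces a unique continuous $\mathcal{F}_t$-adapted process on $[0,T]$.

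It remains to record the global estimate $\mathbb{E}[\sup_{0\le t\le T}|X(t)|^2]<\infty$, for which I would apply the Burkholder–Davis–Gundy inequality to the stochastic integral and Gronwall's lemma to the resulting integral inequality on each subinterval, then chain the bounds across the finitely many steps. The only real subtlety is the bookkeeping that keeps the argument honest: confirming adaptedness of each delayed input and propagating the square-integrability of the initial datum $X(k\delta)$ from one step to the next; because the delay is strictly backward, on each interval of length $\delta$ the term $X(t-\delta)$ is already fixed, so classical well-posedness applies verbatim. (Alternatively one could run a single global Picard iteration on $S^2_\mathcal{F}([0,T];\mathbf{R}^n)$ and obtain a contraction in a time-weighted norm $\mathbb{E}\sup_t e^{-\beta t}|X(t)|^2$ with $\beta$ large; the method of steps is cleaner here since $T$ and $\delta$ are both fixed and finite.)
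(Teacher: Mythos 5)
Your argument is correct. Note, however, that the paper does not prove Proposition \ref{pro2.1} at all: it simply invokes Theorem 2.2 of Chen and Wu \cite{CW10}, whose proof is the standard global Picard iteration / contraction-mapping argument on $[0,T]$ (essentially the ``alternative'' you mention in your closing parenthesis). Your method of steps is therefore a genuinely different, self-contained route. What it buys is elementariness: because the delay is a strictly backward constant lag, on each window $[k\delta,(k+1)\delta]$ the delayed argument is a frozen, adapted, square-integrable input, so the classical well-posedness theorem for It\^o SDEs with random Lipschitz coefficients applies verbatim, and only $\lceil T/\delta\rceil$ iterations are needed. The price is that this argument is tied to the pointwise constant delay; the contraction approach of \cite{CW10} (and of Peng--Yang \cite{PY09} for the anticipated equations later in the paper) generalizes more readily to path-dependent or time-varying delays such as the $\kappa(\cdot),\zeta(\cdot)$ appearing in (\ref{ABSDE}). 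Your bookkeeping is sound: the adaptedness of $X(t-\delta)$ via $\mathcal{F}_{t-\delta}\subseteq\mathcal{F}_t$, the propagation of $\mathbb{E}|X(k\delta)|^2<\infty$ from the $S^2$ bound on the preceding window, and the BDG--Gronwall chaining for the global $S^2$ estimate are exactly the points that need checking, and each goes through as you describe.
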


In the following, an estimate for the solution to the SDDE (\ref{SDDE}) is given. Since it is fundamental in proving Lemma 4 and the main theorem of this paper, the detailed proof is given.

\begin{mylem}\label{lem2.1}
Suppose (\textbf{H1}) and (\textbf{H2}) hold, let $\varphi:\Omega\rightarrow C([-\delta,0];\mathbf{R}^n)$ is $\mathcal{F}_0$-measurable and $\mathbb{E}\Big[\sup\limits_{-\delta\leq t\leq 0}|\varphi(t)|^2\Big]<\infty$. Then for $p\geq2$, the solution to SDDE (\ref{SDDE}) satisfies the following estimate:
\begin{equation}\label{estimate}
\begin{aligned}
  \mathbb{E}\Big[\sup\limits_{0\leq t\leq T}|X(t)|^p\Big]
  &\leq C\mathbb{E}\bigg[\Big(\int_0^T|b(r,0,0)|dr\Big)^p+\Big(\int_0^T|\sigma(r,0,0)|^2dr\Big)^{\frac{p}{2}}+\sup\limits_{-\delta\leq r\leq 0}|\varphi(r)|^p\bigg],
\end{aligned}
\end{equation}
where $C\equiv C(\delta,T,D,p)$ is a constant depends on $\delta,T,D,p$.
\end{mylem}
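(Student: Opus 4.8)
The plan is to work directly from the integral form of the solution of (\ref{SDDE}) and to combine the Burkholder--Davis--Gundy (BDG) inequality with the Lipschitz assumption (\textbf{H1}), reducing everything to a Gronwall-type inequality. Writing
\[
X(t)=\varphi(0)+\int_0^t b(r,X(r),X(r-\delta))\,dr+\int_0^t \sigma(r,X(r),X(r-\delta))\,dB(r),
\]
I would first apply the elementary inequality $|a+b+c|^p\le 3^{p-1}(|a|^p+|b|^p+|c|^p)$ to separate the initial value, the drift integral and the stochastic integral, and then take $\sup_{0\le t\le T}$ and expectation.

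For the drift term I would use H\"older's inequality in time together with the bound $|b(r,X(r),X(r-\delta))|\le |b(r,0,0)|+D(|X(r)|+|X(r-\delta)|)$ coming from (\textbf{H1}), isolating the data contribution in the exact form $\big(\int_0^T|b(r,0,0)|\,dr\big)^p$ required by (\ref{estimate}). For the stochastic integral I would apply the BDG inequality to replace $\mathbb{E}[\sup_{0\le s\le t}|\int_0^s\sigma\,dB|^p]$ by $C_p\,\mathbb{E}[(\int_0^t|\sigma|^2\,dr)^{p/2}]$, and then use (\textbf{H1}) and H\"older (valid since $p/2\ge1$) to extract the term $\big(\int_0^T|\sigma(r,0,0)|^2\,dr\big)^{p/2}$. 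After these steps all the $X$-dependent contributions are dominated by $C\int_0^t\mathbb{E}[|X(r)|^p+|X(r-\delta)|^p]\,dr$.

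The characteristic difficulty is the delayed term. I would handle $\int_0^t|X(r-\delta)|^p\,dr$ by the change of variable $s=r-\delta$ and split the resulting integral at $0$: the part over $[-\delta,0]$ equals $\int_{-\delta}^0|\varphi(s)|^p\,ds\le \delta\sup_{-\delta\le s\le0}|\varphi(s)|^p$, which feeds into the initial-path data term, while the part over $[0,t-\delta]$ is dominated by $\int_0^t\mathbb{E}[\sup_{0\le u\le r}|X(u)|^p]\,dr$. Setting $\psi(t):=\mathbb{E}[\sup_{0\le s\le t}|X(s)|^p]$ and using $\mathbb{E}|\varphi(0)|^p\le\mathbb{E}[\sup_{-\delta\le s\le0}|\varphi(s)|^p]$, this yields
\[
\psi(t)\le C\,\mathbb{E}\Big[\Big(\int_0^T|b(r,0,0)|\,dr\Big)^p+\Big(\int_0^T|\sigma(r,0,0)|^2\,dr\Big)^{p/2}+\sup_{-\delta\le s\le0}|\varphi(s)|^p\Big]+C\int_0^t\psi(r)\,dr,
\]
and Gronwall's inequality closes the estimate with a constant of the form $C(\delta,T,D,p)$.

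The main obstacle I anticipate is the a priori integrability needed to legitimately apply Gronwall: at the outset $\psi(T)$ is only known to be finite for $p=2$ via Proposition \ref{pro2.1}. To justify the argument for general $p\ge2$ I would introduce the stopping times $\tau_N=\inf\{t\ge0:|X(t)|\ge N\}$, carry out the whole computation with $t$ replaced by $t\wedge\tau_N$ so that $\mathbb{E}[\sup_{0\le s\le t\wedge\tau_N}|X(s)|^p]\le N^p<\infty$, apply Gronwall to obtain a bound uniform in $N$, and finally let $N\to\infty$ by the monotone convergence (Fatou) theorem. The delay bookkeeping and this localization are the only genuinely non-routine points; the remaining steps are standard moment estimates.
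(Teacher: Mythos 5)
Your proposal is correct and follows essentially the same route as the paper's proof: integral form plus Burkholder--Davis--Gundy and (\textbf{H1}) to isolate the data terms, the change of variable $r\mapsto r-\delta$ split at $0$ to absorb the delayed term into the initial-path data, and Gronwall to close. Your additional localization via the stopping times $\tau_N$ (and the careful use of $\psi(t)=\mathbb{E}[\sup_{0\le s\le t}|X(s)|^p]$ as the Gronwall quantity) is a rigor point the paper passes over silently, but it does not change the argument.
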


\begin{proof}
By (\textbf{H1}) and Burkholder-Davis-Gundy's inequality, we have
\begin{equation*}\begin{aligned}
       &\mathbb{E}\Big[\sup\limits_{0\leq t \leq T}|X(t)|^p\Big]
        \leq C(p)\mathbb{E}\bigg(\int_0^T|b(r,X(r),X(r-\delta))|dr\bigg)^p\\
       &\ +C(p)\mathbb{E}\bigg(\int_0^T|\sigma(r,X(r),X(r-\delta))|^2dr\bigg)^{\frac{p}{2}}+C(p)\mathbb{E}|X(0)|^p\\
  \leq &\ C(p)\mathbb{E}|X(0)|^p+C(p)\mathbb{E}\bigg(\int_0^T\Big(|b(r,0,0)|+D|X(r)|+D|X(r-\delta)|\Big)dr\bigg)^p\\
       &+C(p)\mathbb{E}\bigg(\int_0^T\Big(|\sigma(r,0,0)|+D|X(r)|+D|X(r-\delta)|\Big)^2dr\bigg)^{\frac{p}{2}}\\
\end{aligned}\end{equation*}
\begin{equation*}\begin{aligned}
  \leq &\ C(p)\mathbb{E}|X(0)|^p+C(p)\mathbb{E}\Big(\int_0^T|b(r,0,0)|dr\Big)^p+C(p)\mathbb{E}\Big(\int_0^T|\sigma(r,0,0)|^2dr\Big)^{\frac{p}{2}}\\
       &+C(p,D,T)\mathbb{E}\int_0^T|X(r)|^pdr+C(p,D,T)\mathbb{E}\int_0^T|X(r-\delta)|^pdr.
\end{aligned}\end{equation*}
Noting
\begin{equation*}
  \mathbb{E}\int_0^T|X(r-\delta)|^pdr=\mathbb{E}\int_{-\delta}^{T-\delta}|X(r)|^pdr
  \leq\delta\mathbb{E}\bigg[\sup\limits_{-\delta\leq r\leq0}|\varphi(r)|^p\bigg]+\mathbb{E}\int_0^T|X(r)|^pdr,
\end{equation*}
hence we obtain
\begin{equation*}\begin{aligned}
       &\mathbb{E}\Big[\sup\limits_{0\leq t \leq T}|X(t)|^p\Big]
       \leq C(p,D,T,\delta)\mathbb{E}\Big[\sup\limits_{-\delta\leq r\leq 0}|\varphi(r)|^p\Big]+C(p)\mathbb{E}\bigg(\int_0^T|b(r,0,0)|dr\bigg)^p\\
       &\qquad +C(p)\mathbb{E}\bigg(\int_0^T|\sigma(r,0,0)|^2dr\bigg)^{\frac{p}{2}}+C(p,D,T)\mathbb{E}\int_0^T|X(r)|^pdr.
\end{aligned}\end{equation*}
Applying Gronwall's inequality, the proof of (\ref{estimate}) is completed.
\end{proof}

Let $\mathbf{R}^+$ be space of real numbers not less than zero. We consider the following ABSDE:
\begin{equation}\left\{\begin{aligned}\label{ABSDE}
  -dY(t)&=f\big(t,Y(t),Z(t),Y(t+\kappa(t)),Z(t+\zeta(t))\big)dt-Z(t)dB(t),\ t\in[0,T],\\
    Y(t)&=\mu(t),\ Z(t)=\nu(t),\ t\in[T,T+K].
\end{aligned}\right.\end{equation}
In the above, terminal conditions $\mu(\cdot)\in\mathcal{S}_\mathcal{F}^2\left([T,T+K];\mathbf{R}^m\right)$ and $\nu(\cdot)\in L_\mathcal{F}^2\left([T,T+K];\mathbf{R}^{m\times d}\right)$ are given, $\kappa(\cdot)$ and $\zeta(\cdot)$ are given $\mathbf{R}^+$-valued functions defined on $[0,T]$ satisfying:

\textbf{(H3)} (i) There exists a constant $K\geq0$ such that for all $s\in[0,T],\ s+\kappa(s)\leq T+K,\ s+\zeta(s)\leq T+K$;

(ii) There exists a constant $L\geq0$ such that for all $t\in[0,T]$ and for all nonnegative and integrable function $g(\cdot)$,
\begin{equation*}\begin{aligned}
  &\int_t^Tg(s+\kappa(s))ds\leq L\int_t^{T+K}g(s)ds,\\
  &\int_t^Tg(s+\zeta(s))ds\leq L\int_t^{T+K}g(s)ds.\\
\end{aligned}\end{equation*}

We impose the following conditions to the generator of ABSDE (\ref{ABSDE}):

\textbf{(H4)} $f(s,\omega,y,z,\kappa,\zeta):\Omega\times\mathbf{R}^m\times\mathbf{R}^{m\times d}\times L_{\mathcal{F}}^2([s,T+K];\mathbf{R}^m)\times L_\mathcal{F}^2([s,T+K];\mathbf{R}^{m\times d})\rightarrow L^2(\mathcal{F}_s;\mathbf{R}^m)$ for all $s\in[0,T]$ and $\mathbb{E}\big[\int_0^T|f(s,0,0,0,0)|^2ds\big]<+\infty$.

\textbf{(H5)} There exists a constant $C>0$ such that for all $s\in[0,T],\ y,y^\prime\in\mathbf{R}^m,\ z,z^\prime\in\mathbf{R}^{m\times d},\ \kappa,\kappa^\prime\in L_\mathcal{F}^2\left([s,T+K];\mathbf{R}^m\right),\ \eta,\eta^\prime\in L_\mathcal{F}^2\left([s,T+K];\mathbf{R}^{m\times d}\right),\ r,r^\prime\in[s,T+K]$, we have
\begin{equation*}
  \left|f\left(s,y,z,\kappa_r,\eta_{r^\prime}\right)-f\left(s,y^\prime,z^\prime,\kappa_r^\prime,\eta_{r^\prime}^\prime\right)\right|\leq C\left(\left|y-y^\prime\right|+\left|z-z^\prime\right|+\mathbb{E}^{\mathcal{F}_s}\left[\left|\kappa_r-\kappa_r^\prime\right|+\left|\eta_{r^\prime}-\eta_{r^\prime}^\prime\right|\right]\right),
\end{equation*}
where $\mathbb{E}^{\mathcal{F}_s}[\cdot]\equiv\mathbb{E}[\cdot|\mathcal{F}_s]$ denotes the conditional expectation, for $s\geq0$.

The following result can be found in Peng and Yang \cite{PY09}.
\begin{mypro}\label{pro2.3}
Let \textbf{(H3)}, \textbf{(H4)} and \textbf{(H5)} hold. Then for given $\mu(\cdot)\in\mathcal{S}_\mathcal{F}^2\left([T,T+K];\mathbf{R}^m\right)$ and $\nu(\cdot)\in L_\mathcal{F}^2\left([T,T+K];\mathbf{R}^{m\times d}\right)$, the ABSDE (\ref{ABSDE}) admits a unique $\mathcal{F}_t$-adapted solution pair $(Y(\cdot),Z(\cdot))\in\mathcal{S}_\mathcal{F}^2([0,T+K];\mathbf{R}^m)\times L_\mathcal{F}^2\left([0,T+K];\mathbf{R}^{m\times d}\right)$.
\end{mypro}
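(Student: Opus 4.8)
The plan is to obtain existence and uniqueness by a Banach fixed-point argument in a weighted space, following the scheme of Peng and Yang \cite{PY09}. Having fixed the terminal data $(\mu,\nu)$ on $[T,T+K]$, I would work in the set $\mathcal{H}$ of pairs $(y,z)\in L^2_\mathcal{F}([0,T+K];\mathbf{R}^m)\times L^2_\mathcal{F}([0,T+K];\mathbf{R}^{m\times d})$ that coincide with $(\mu,\nu)$ on $[T,T+K]$; this is a closed affine subspace, hence complete under the weighted norm
\begin{equation*}
\|(y,z)\|_\beta^2:=\mathbb{E}\int_0^T e^{\beta s}\big(|y(s)|^2+|z(s)|^2\big)\,ds,
\end{equation*}
for a parameter $\beta>0$ to be chosen large. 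Given $(y,z)\in\mathcal{H}$, freeze the anticipated arguments and set $\tilde f(s):=f\big(s,y(s),z(s),y(s+\kappa(s)),z(s+\zeta(s))\big)$. Using \textbf{(H4)}, \textbf{(H5)}, Jensen's inequality and the integral comparison \textbf{(H3)}(ii) (with $g=|y|^2$), one verifies $\tilde f\in L^2_\mathcal{F}([0,T];\mathbf{R}^m)$, so the generator no longer depends on the unknowns and one is left with the classical BSDE $-dY(t)=\tilde f(t)\,dt-Z(t)\,dB(t)$, $Y(T)=\mu(T)$, whose unique adapted solution $(Y,Z)$ is furnished by the martingale representation theorem. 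Extending $(Y,Z)=(\mu,\nu)$ on $[T,T+K]$ defines the map $\Phi(y,z):=(Y,Z)$ on $\mathcal{H}$; the $\mathcal{S}^2$-regularity of $Y$ required by the statement follows afterwards from the standard a priori estimate for this BSDE.

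Next I would prove that $\Phi$ contracts for large $\beta$. For two inputs, write $\hat y=y^1-y^2$, and similarly $\hat z,\hat Y,\hat Z$, and $\hat f=\tilde f^1-\tilde f^2$; note $\hat Y(T)=0$ and $\hat y=\hat z=0$ on $[T,T+K]$. It\^o's formula applied to $e^{\beta s}|\hat Y(s)|^2$ yields
\begin{equation*}
\mathbb{E}|\hat Y(0)|^2+\mathbb{E}\int_0^T e^{\beta s}\big(\beta|\hat Y|^2+|\hat Z|^2\big)\,ds=2\,\mathbb{E}\int_0^T e^{\beta s}\langle\hat Y,\hat f\rangle\,ds,
\end{equation*}
while \textbf{(H5)} gives
\begin{equation*}
|\hat f(s)|\leq C\Big(|\hat y(s)|+|\hat z(s)|+\mathbb{E}^{\mathcal{F}_s}\big[|\hat y(s+\kappa(s))|+|\hat z(s+\zeta(s))|\big]\Big).
\end{equation*}
Young's inequality $2\langle\hat Y,\hat f\rangle\leq\tfrac{\beta}{2}|\hat Y|^2+\tfrac{2}{\beta}|\hat f|^2$ then lets me retain a positive multiple of $|\hat Y|^2$ on the left while trading $|\hat f|^2$ against a factor $\beta^{-1}$, reducing everything to estimating $\mathbb{E}\int_0^T e^{\beta s}|\hat f|^2\,ds$.

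The crux, and the step I expect to be the main obstacle, is controlling the anticipated contributions to $|\hat f|^2$, namely terms such as $\mathbb{E}\int_0^T e^{\beta s}\big(\mathbb{E}^{\mathcal{F}_s}[|\hat y(s+\kappa(s))|]\big)^2\,ds$. Here I would use Jensen's inequality to pass to $\mathbb{E}^{\mathcal{F}_s}[|\hat y(s+\kappa(s))|^2]$, then the tower property to remove the conditioning, and finally apply \textbf{(H3)}(ii) to the nonnegative function $g(u):=e^{\beta u}|\hat y(u)|^2$: since $\kappa(s)\geq0$ gives $e^{\beta s}\leq e^{\beta(s+\kappa(s))}$,
\begin{equation*}
\begin{aligned}
\mathbb{E}\int_0^T e^{\beta s}|\hat y(s+\kappa(s))|^2\,ds
&\leq\mathbb{E}\int_0^T g(s+\kappa(s))\,ds\leq L\,\mathbb{E}\int_0^{T+K}e^{\beta s}|\hat y(s)|^2\,ds\\
&=L\,\mathbb{E}\int_0^{T}e^{\beta s}|\hat y(s)|^2\,ds,
\end{aligned}
\end{equation*}
the last equality because $\hat y$ vanishes on $[T,T+K]$. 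Treating the $\hat z$-term identically, the whole estimate collapses to $\|\Phi(y^1,z^1)-\Phi(y^2,z^2)\|_\beta^2\leq\rho(\beta)\|(\hat y,\hat z)\|_\beta^2$ with $\rho(\beta)\to0$ as $\beta\to\infty$, so for $\beta$ large $\Phi$ is a contraction. Banach's fixed-point theorem then produces a unique fixed point, which is exactly the unique adapted solution $(Y,Z)$ of the ABSDE (\ref{ABSDE}) in the stated class.
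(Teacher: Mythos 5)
Your argument is correct, and it is essentially the proof of the cited source: the paper itself gives no proof of this proposition but simply quotes it from Peng and Yang \cite{PY09}, whose argument is exactly this contraction-mapping scheme in a $\beta$-weighted $L^2$ norm, with the anticipated terms tamed by Jensen, the tower property, and \textbf{(H3)}(ii) applied to $g(u)=e^{\beta u}|\hat y(u)|^2$ together with the vanishing of $\hat y,\hat z$ on $[T,T+K]$. No gaps.
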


\begin{Remark}\label{rem2.1}
Note that the ABSDE (\ref{ABSDE}) is in a more general form with time-dependent delay terms $\kappa(\cdot),\zeta(\cdot)$, comparing with the SDDE (\ref{SDDE}) where only constant pointwise delay $\delta$ is considered. In fact, the ABSDEs introduced in our stochastic optimal control problem with delay in the next section are some special cases of (\ref{ABSDE}). We leave the result of \cite{PY09} untouched for the readers' convenience.
\end{Remark}

\section{Problem formulation and variational inequality}

In this section, we formulate the problem which will be studied and apply the spike variation technique to give the variational inequality.

Suppose $\mathbf{U}\subseteq\mathbf{R}^k$ is nonempty and nonconvex, let $\delta>0$ be a given constant time delay parameter, we consider the following stochastic control system with delay:
\begin{eqnarray}\left\{\begin{aligned}\label{controlled SDDE}
  dX(t)&=b(t,X(t),X(t-\delta),v(t),v(t-\delta))dt\\
       &\quad+\sigma(t,X(t),X(t-\delta),v(t),v(t-\delta))dB(t),\ t\geq 0,\\
   X(t)&=\varphi(t),\ v(t)=\eta(t),\ t\in[-\delta,0],
\end{aligned}\right.\end{eqnarray}
along with the cost functional
\begin{equation}\label{cost functional}
    J(v(\cdot))=\mathbb{E}\bigg[\int_0^Tl(t,X(t),X(t-\delta),v(t),v(t-\delta))dt+h(x(T))\bigg],
\end{equation}
where $b:[0,T]\times\mathbf{R}^n\times\mathbf{R}^n\times\mathbf{U}\times\mathbf{U}\rightarrow\mathbf{R}^n,\ \sigma:[0,T]\times\mathbf{R}^n\times\mathbf{R}^n\times\mathbf{U}\times\mathbf{U}\rightarrow\mathbf{R}^{n\times d},\ l:[0,T]\times\mathbf{R}^n\\\times\mathbf{R}^n\times\mathbf{U}\times\mathbf{U}\rightarrow\mathbf{R}$ and $h:\mathbf{R}^n\rightarrow\mathbf{R}$ are given functions.

We define the admissible controls set as follows:
$$\mathcal{U}_{ad}:=\Big\{v(\cdot)\big|v(\cdot)\mbox{ is a }\mathbf{U}\mbox{-valued, square-integrable},\ \mathcal{F}_t\mbox{-predictable process}\Big\}.$$

Our object is to find a control $u(\cdot)$ over $\mathcal{U}_{ad}$ such that (\ref{controlled SDDE}) is satisfied and (\ref{cost functional}) is minimized. Any $u(\cdot)\in \mathcal{U}_{ad}$ that achieves the above infimum is called an optimal control and the corresponding solution $x(\cdot)$ is called the optimal trajectory. $(u(\cdot),x(\cdot))$ is called an optimal pair.

Throughout the paper, we impose the following assumptions.

\textbf{(A1)}
(i) The functions $b=b(t,x,x_\delta,v,v_\delta),\ \sigma=\sigma(t,x,x_\delta,v,v_\delta)$ are twice continuously differentiable with respect to $(x,x_\delta)$ and their partial derivatives are uniformly bounded.

(ii) There exists a constant $C$ such that for $\phi=b,\sigma$,
\begin{equation*}
 |\phi(t,x,x_\delta,v,v_\delta)|\leq C(1+|x|+|x_\delta|),\ \forall x,x_\delta\in\mathbf{R}^n,\ v,v_\delta\in\mathbf{U},\ t\geq 0.
\end{equation*}

(iii) The function $\varphi:\Omega\rightarrow C([-\delta,0];\mathbf{R}^n)$ is $\mathcal{F}_0$-measurable and $\mathbb{E}\Big[\sup\limits_{-\delta\leq t\leq 0}|\varphi(t)|^2\Big]<\infty$. Meanwhile, the function $\eta:\Omega\rightarrow L^2([-\delta,0];\mathbf{R}^k)$ is $\mathcal{F}_0$-measurable and $\mathbb{E}\int_{-\delta}^0|\eta(t)|^2dt<\infty$.

(iv) $b,b_x,b_{x_\delta},b_{xx},b_{x_\delta x_\delta},b_{xx_\delta},\sigma,\sigma_x,\sigma_{x_\delta},\sigma_{xx},\sigma_{x_\delta x_\delta},\sigma_{xx_\delta}$ are continuous in $(x,x_\delta,v,v_\delta)$.

\vspace{1mm}

Under \textbf{(A1)}, for any admissible control $v(\cdot)$, (\ref{controlled SDDE}) admits a unique adapted solution $x(\cdot)\in\mathcal{S}_\mathcal{F}^2([0,T];\mathbf{R}^n)$ by Proposition \ref{pro2.1}.

The so-called global maximum principle, is some necessary conditions such that any optimal control should satisfies. In order to obtain it, we first introduce the variational equation, corresponding to (\ref{controlled SDDE}). Let $u(\cdot)$ be the optimal control and $x(\cdot)$ is the optimal trajectory. Since the control domain is nonconvex, as Peng \cite{Peng90}, we introduce the following spike variation of $u(\cdot)$. Let us define $u^\varepsilon(\cdot)$ as follows, which is a perturbed admissible control of the form:
\begin{eqnarray}\begin{aligned}\label{perturbed control}
u^\varepsilon(t)=
    \begin{cases}
    u(t), &t\notin[\tau,\tau+\varepsilon],\\
    v(t), &t\in[\tau,\tau+\varepsilon],
    \end{cases}\quad\mbox{ for\ all\ }t\in[0,T],
\end{aligned}\end{eqnarray}
and $x^\varepsilon(\cdot)$ is the corresponding trajectory, where $v(\cdot)$ is any admissible control.

For simplicity of presentation, in the following we only discuss the one-dimensional case, namely, $n=k=d=1$. However, the multi-dimensional case can be obtained without any difficulties, and we will present the corresponding results in Section 5.

We then introduce the first-order and second-order variational equations for $x(\cdot)$ as follows:
\begin{eqnarray}\left\{\begin{aligned}\label{variational equation-1}
    dx_1(t)=&\big[b_x(t)x_1(t)+b_{x_\delta}(t)x_1(t-\delta)+\Delta b(t)\big]dt\\
            &+\big[\sigma_x(t)x_1(t)+\sigma_{x_\delta}(t)x_1(t-\delta)+\Delta\sigma(t)\big]dB(t),\ t\geq0,\\
     x_1(t)=&\ 0,\ -\delta\leq t\leq 0,
\end{aligned}\right.\end{eqnarray}
\begin{eqnarray}\left\{\begin{aligned}\label{variational equation-2}
    dx_2(t)=&\big[b_x(t)x_2(t)+b_{x_{\delta}}(t)x_2(t-\delta)+\frac{1}{2}b_{xx}(t)|x_1(t)|^2+\frac{1}{2}b_{x_\delta x_\delta}(t)|x_1(t-\delta)|^2\\
            &\quad+b_{xx_\delta}(t)x_1(t)x_1(t-\delta)\big]dt\\
            &+\big[\sigma_x(t)x_2(t)+\sigma_{x_\delta}(t)x_2(t-\delta)+\frac{1}{2}\sigma_{xx}(t)|x_1(t)|^2+\frac{1}{2}\sigma_{x_\delta x_\delta}(t)|x_1(t-\delta)|^2\\
            &\quad+\sigma_{xx_\delta}(t)x_1(t)x_1(t-\delta))+\Delta\sigma_x(t)x_1(t)+\Delta\sigma_{x_\delta}(t)x_1(t-\delta)\big]dB(t),\ t\geq 0,\\
     x_2(t)=&\ 0,\ -\delta\leq t\leq 0,
\end{aligned}\right.\end{eqnarray}
where we denote $\Theta(t)\equiv(x(t),x(t-\delta),u(t),u(t-\delta))$ for simplicity and
\begin{eqnarray*}\left\{\begin{aligned}
          &b_x(t)=b_x(t,\Theta(t)),\ b_{x_\delta}(t)=b_{x_\delta}(t,\Theta(t)),\ b_{xx}(t)=b_{xx}(t,\Theta(t)),\ b_{xx_\delta}(t)=b_{xx_\delta}(t,\Theta(t)),\\
          &b_{x_\delta x_\delta}(t)=b_{x_\delta x_\delta}(t,\Theta(t)),\ \Delta b(t)=b(t,x(t),x(t-\delta),u^{\varepsilon}(t),u^{\varepsilon}(t-\delta))-b(t,\Theta(t)),\\
          &\Delta b_x(t)=b_x(t,x(t),x(t-\delta),u^{\varepsilon}(t),u^{\varepsilon}(t-\delta))-b_x(t,\Theta(t)),\\
          &\Delta b_{x_\delta}(t)=b_{x_\delta}(t,x(t),x(t-\delta),u^\epsilon(t),u^{\varepsilon}(t-\delta))-b_{x_\delta}(t,\Theta(t)),\\
\end{aligned}\right.\end{eqnarray*}
and $\sigma_x(t),\sigma_{x_\delta}(t),\sigma_{xx}(t),\sigma_{x_\delta x_\delta}(t),\sigma_{xx_\delta}(t),\Delta \sigma(t),\Delta \sigma_x(t),\Delta \sigma_{x_\delta}(t)$ can be defined similarly.

Under the assumption (\textbf{A1}), (i-iii), the variation equations (\ref{variational equation-1}) and (\ref{variational equation-2}) admit a unique solution $x_1(\cdot),x_2(\cdot)\in\mathcal{S}^2_\mathcal{F}([0,T];\mathbf{R}^n)$, respectively.

Note that the first variation equation (\ref{variational equation-1}) is different from (6) in Chen and Wu \cite{CW10}, where the control domain $\textbf{U}$ is convex. In fact, since in this paper $\textbf{U}$ is nonconvex, the above two variation equations (\ref{variational equation-1}) and (\ref{variational equation-2}) are similar to the classical ones (5) and (6) in Peng \cite{Peng90}, except for the cross terms like $b_{xx_\delta},\sigma_{xx_\delta}$ in (\ref{variational equation-2}). We point out that the cross terms appear naturally when we deal with the stochastic optimal control problem with delay, when applying the second-order Taylor's expansion to the state variable $X(\cdot)$ along the optimal trajectory $x(\cdot)$.

In the following, we introduce some estimates and their proofs are given since they are not completely standard.
\begin{mylem}\label{lem3.1}
Let assumption \textbf{(A1)} hold. Suppose $x(\cdot)$ is the optimal trajectory, $x^\varepsilon(\cdot)$ is the trajectory corresponding to $u^\varepsilon(\cdot)$, then for any $p\geq1$,
\begin{equation}\label{estimate-1}
      \mathbb{E}\Big[\sup_{0\leq t\leq T}|x^\varepsilon(t)-x(t)|^{2p}\Big]=O(\varepsilon^p),
\end{equation}
\begin{equation}\label{estimate-2}
      \mathbb{E}\Big[\sup_{0\leq t\leq T}|x_1(t)|^{2p}\Big]=O(\varepsilon^p),
\end{equation}
\begin{equation}\label{estimate-3}
      \mathbb{E}\Big[\sup_{0\leq t\leq T}|x_2(t)|^p\Big]=O(\varepsilon^p),
\end{equation}
\begin{equation}\label{estimate-4}
      \mathbb{E}\Big[\sup_{0\leq t\leq T}|x^\varepsilon(t)-x(t)-x_1(t)|^{2p}\Big]=o(\varepsilon^p),
\end{equation}
\begin{equation}\label{estimate-5}
      \mathbb{E}\Big[\sup_{0\leq t\leq T}|x^\varepsilon(t)-x(t)-x_1(t)-x_2(t)|^p\Big]=o(\varepsilon^p).
\end{equation}
\end{mylem}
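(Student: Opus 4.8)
The plan is to establish the five estimates in the stated order, each time writing the relevant difference process as the solution of an SDDE and invoking the $L^p$-estimate of Lemma \ref{lem2.1} (which plays here the role of the $L^p$-estimate for SDEs in the delay-free theory of Peng \cite{Peng90}). Two bookkeeping facts will be used throughout. First, I may assume the exponent appearing in Lemma \ref{lem2.1} is at least $2$, the remaining low exponents following by Jensen's inequality. Second, because the control enters both through $v(t)$ and through $v(t-\delta)$, the spike perturbation $u^\varepsilon-u$ differs from zero only on $[\tau,\tau+\varepsilon]\cup[\tau+\delta,\tau+\varepsilon+\delta]$, a set of Lebesgue measure at most $2\varepsilon$; every ``$\Delta$''-quantity ($\Delta b,\Delta\sigma,\Delta b_x,\Delta\sigma_x,\dots$) is supported there and is uniformly bounded by \textbf{(A1)}. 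I will also repeatedly use the delay shift $\int_0^T|\phi(r-\delta)|^2dr=\int_{-\delta}^{T-\delta}|\phi(r)|^2dr\le\int_0^T|\phi(r)|^2dr$, valid for any process vanishing on $[-\delta,0]$, to absorb delayed arguments back into the non-delayed integral exactly as in the proof of Lemma \ref{lem2.1}.

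For (\ref{estimate-1}) and (\ref{estimate-2}) I would observe that $\xi:=x^\varepsilon-x$ and $x_1$ each solve an SDDE whose coefficients are Lipschitz in the state variables (bounded derivatives, \textbf{(A1)}(i)) and whose inhomogeneous part is a $\Delta$-quantity. Applying Lemma \ref{lem2.1} with exponent $2p$, the drift contribution is $O(\varepsilon^{2p})$ while the diffusion contribution is $\big(\int_0^T|\Delta\sigma|^2dr\big)^p=O(\varepsilon^{p})$, so the diffusion term dominates and both left-hand sides are $O(\varepsilon^p)$. For (\ref{estimate-3}), $x_2$ solves a linear SDDE whose inhomogeneous drift is $O(|x_1|^2+|x_1(\cdot-\delta)|^2)$ and whose inhomogeneous diffusion additionally contains the genuinely new terms $\Delta\sigma_x(t)x_1(t)$ and $\Delta\sigma_{x_\delta}(t)x_1(t-\delta)$. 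The quadratic pieces are controlled by $\mathbb{E}\big[\sup|x_1|^{2p}\big]=O(\varepsilon^p)$ from (\ref{estimate-2}); the $\Delta\sigma_x x_1$ pieces are spike-supported, so $\int_0^T|\Delta\sigma_x x_1|^2dr\le C\varepsilon\sup|x_1|^2$ and their contribution is $O(\varepsilon^{p/2})\cdot O(\varepsilon^{p/2})=O(\varepsilon^p)$. Lemma \ref{lem2.1} then yields (\ref{estimate-3}).

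Estimate (\ref{estimate-4}) is where the argument stops being a pure power count. Writing $\eta:=x^\varepsilon-x-x_1$ and expanding $b(t,x^\varepsilon,x^\varepsilon_\delta,u^\varepsilon,u^\varepsilon_\delta)-b(t,x,x_\delta,u^\varepsilon,u^\varepsilon_\delta)$ by the mean value theorem, $\eta$ solves an SDDE with Lipschitz linear part plus an inhomogeneous drift $\alpha=(\bar b_x-b_x)x_1+(\bar b_{x_\delta}-b_{x_\delta})x_1(\cdot-\delta)$ and a corresponding diffusion term $\beta$, where $\bar b_x$ denotes the derivative averaged along the segment joining $x$ to $x^\varepsilon$ evaluated at the perturbed control. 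Lemma \ref{lem2.1} reduces the task to showing that $\mathbb{E}\big[(\int_0^T|\alpha|\,dr)^{2p}\big]$ and $\mathbb{E}\big[(\int_0^T|\beta|^2dr)^{p}\big]$ are $o(\varepsilon^p)$. By the Cauchy--Schwarz inequality each splits into a factor $\big(\mathbb{E}[\sup|x_1|^{4p}]\big)^{1/2}=O(\varepsilon^p)$, coming from (\ref{estimate-2}), times a factor built from $\bar b_x-b_x$ (resp.\ $\bar\sigma_x-\sigma_x$) alone. The hard part is that this second factor tends to $0$: the integrand is bounded by $2\|b_x\|_\infty$, it vanishes off the spike where $u^\varepsilon=u$, and there $x^\varepsilon\to x$ forces $\bar b_x-b_x\to0$ by continuity of $b_x$ (\textbf{(A1)}(iv)) together with (\ref{estimate-1}), so the dominated convergence theorem gives $o(1)$. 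Multiplying, $\mathbb{E}[\sup|\eta|^{2p}]=O(\varepsilon^p)\cdot o(1)=o(\varepsilon^p)$.

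Finally, (\ref{estimate-5}) is the main obstacle and the only place where the second-order structure and the cross terms $b_{xx_\delta},\sigma_{xx_\delta}$ enter. Setting $\zeta:=x^\varepsilon-x-x_1-x_2$, I would expand $b^\varepsilon-b$ and $\sigma^\varepsilon-\sigma$ to second order in $(x^\varepsilon-x,\,x^\varepsilon(\cdot-\delta)-x(\cdot-\delta))$ and subtract the drifts and diffusions of (\ref{variational equation-1}) and (\ref{variational equation-2}). After collecting the genuinely linear-in-$\zeta$ coefficients (bounded, hence Lipschitz for Lemma \ref{lem2.1}), the remaining terms split into: quadratic remainders in $x_2,\zeta$ controlled by (\ref{estimate-3}); spike-supported products of $\Delta$-coefficients with $x_1^2$, with the cross term $x_1 x_1(\cdot-\delta)$ and with $x_1(\cdot-\delta)^2$; and the second-order Taylor remainders, which by continuity of the second derivatives (\textbf{(A1)}(iv)) together with (\ref{estimate-4}) are handled by the dominated-convergence argument of the previous paragraph. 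The decisive cancellation is that the terms $\Delta\sigma_x(t)x_1(t)+\Delta\sigma_{x_\delta}(t)x_1(t-\delta)$ built into the diffusion of (\ref{variational equation-2}) exactly remove the first-order part of the control-state interaction, leaving only $\Delta\sigma_x x_2+\Delta\sigma_{x_\delta}x_2(\cdot-\delta)$, which is spike-supported and of order $x_2$, hence negligible by (\ref{estimate-3}); this is precisely why those terms are placed in the second variational equation. Assembling all pieces and invoking Lemma \ref{lem2.1} with exponent $p$ gives $\mathbb{E}[\sup|\zeta|^{p}]=o(\varepsilon^p)$.
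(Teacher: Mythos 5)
Your proposal follows essentially the same route as the paper's proof: each difference process is written as an SDDE with spike-supported inhomogeneity, Lemma \ref{lem2.1} converts the problem into moment bounds on those inhomogeneities, and the $o(\varepsilon^p)$ estimates come from a Cauchy--Schwarz splitting into an $O(\varepsilon^p)$ factor times a factor driven to zero by continuity of the derivatives (\textbf{(A1)}(iv)) and dominated convergence, exactly as in the paper (your minor bookkeeping differences, e.g.\ multiplying $b_x^\theta-b_x$ by $x_1$ rather than by $x^\varepsilon-x$ and absorbing bounded coefficients into the linear part, are equivalent). The one slip is the claim that $\Delta b$ and $\Delta\sigma$ are uniformly bounded: assumption \textbf{(A1)}(ii) only gives linear growth in the state, so these terms must be controlled through the moment estimates for the state as the paper does, which does not affect the conclusion.
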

\begin{proof}
For the simplicity of presentations, let $E_\varepsilon=[\tau,\tau+\varepsilon]\bigcup[\tau+\delta,\tau+\delta+\varepsilon]$. Noting when $t\in E_\varepsilon$, we have $\Delta b(t)\neq 0$, etc. If we choose $\varepsilon$ enough small, then $|E_\varepsilon|=2\varepsilon$. In the whole proof, $C>0$ is a generic constant, which change from line to line.

First, (\ref{estimate-1}) can be proved by a standard argument, so we omit the details. Next, recall Lemma \ref{lem2.1}, applying the assumption \textbf{(A1)} (ii), we obtain
\begin{equation*}\begin{aligned}
        &\mathbb{E}\Big[\sup\limits_{0\leq t \leq T}|x_1(t)|^{2p}\Big]
         \leq C\mathbb{E}\bigg(\int_0^T|\Delta b(t)|dt\bigg)^{2p}+C\mathbb{E}\bigg(\int_0^T|\Delta \sigma(t)|^2dt\bigg)^p\\
    \leq&\ C\mathbb{E}\bigg(\int_{E_\varepsilon}\big[1+|x(t)|+|x(t-\delta)|+|x^\varepsilon(t)|+|x^\varepsilon(t-\delta)|\big]dt\bigg)^{2p}\\
        &+C\mathbb{E}\bigg(\int_{E_\varepsilon}\big[1+|x(t)|+|x(t-\delta)|+|x^\varepsilon(t)|+|x^\varepsilon(t-\delta)|\big]^2dt\bigg)^p\\
    \leq&\ C(\varepsilon^p+\varepsilon^{2p})\mathbb{E}\Big[1+\sup\limits_{-\delta\leq t \leq T}|x(t)|^{2p}+\sup\limits_{-\delta\leq t \leq T}|x^\varepsilon(t)|^{2p}\Big]\\
    \leq&\ C(\varepsilon^p+\varepsilon^{2p})\mathbb{E}\Big[1+\sup\limits_{-\delta\leq t \leq 0}|\varphi(t)|^{2p}\Big]=O(\varepsilon^p).
\end{aligned}\end{equation*}
By (\ref{estimate}), the proof of (\ref{estimate-2}) is completed. Similarly, with assumption (\textbf{A1}) (i), we deduce
\begin{equation*}\begin{aligned}
        &\mathbb{E}\Big[\sup\limits_{0\leq t\leq T}|x_2(t)|^p\Big]\\
    \leq&\ C\mathbb{E}\bigg(\int_0^T\Big|\frac{1}{2}b_{xx}(t)|x_1(t)|^2+\frac{1}{2}b_{x_\delta x_\delta}(t)|x_1(t-\delta)|^2
         +b_{xx_\delta}(t)x_1(t)x_1(t-\delta)\Big|dt\bigg)^p\\
        &+C\mathbb{E}\bigg(\int_0^T\Big|\frac{1}{2}\sigma_{xx}(t)|x_1(t)|^2+\frac{1}{2}\sigma_{x_\delta x_\delta}(t)|x_1(t-\delta)|^2+\sigma_{xx_\delta}(t)x_1(t)x_1(t-\delta)\\
        &\qquad+\Delta\sigma_x(t)x_1(t)+\Delta\sigma_{x_\delta}(t)x_1(t-\delta)\Big|^2dt\bigg)^\frac{p}{2}\\
    \leq&\ C\mathbb{E}\Big[\sup\limits_{-\delta\leq t\leq T}|x_1(t)|^{2p}\Big]
         +C\mathbb{E}\bigg[\sup\limits_{0\leq t\leq T}|x_1(t)|^p\Big(\int_0^T|\Delta\sigma_x(t)|^2dt\Big)^{\frac{p}{2}}\bigg]\\
        &+C\mathbb{E}\bigg[\sup\limits_{-\delta\leq t\leq T}|x_1(t)|^p\Big(\int_0^T|\Delta\sigma_{x_\delta}(t)|^2dt\Big)^{\frac{p}{2}}\bigg]
         \leq C\varepsilon^p+C\varepsilon^{\frac{p}{2}}\varepsilon^{\frac{p}{2}}=C\varepsilon^p.
\end{aligned}\end{equation*}
Thus (\ref{estimate-3}) holds. In the following, we try to prove (\ref{estimate-4}) and (\ref{estimate-5}). Write
$$\xi(t):=x^\varepsilon(t)-x(t)-x_1(t),\quad \eta(t):=\xi(t)-x_2(t),$$
then $\xi(\cdot)$ and $\eta(\cdot)$ satisfy the following SDDEs, respectively:
\begin{equation}\left\{\begin{aligned}\label{SDDEs-1}
    d\xi(t)=&\Big\{b_x(t)\xi(t)+b_{x_\delta}(t)\xi(t-\delta)+\big[b_x^\theta(t)-b_x(t)\big](x^\varepsilon(t)-x(t))\\
            &\quad+\big[b_{x_\delta}^\theta(t)-b_{x_\delta}(t)\big](x^\varepsilon(t-\delta)-x(t-\delta))\Big\}dt\\
            &+\Big\{\sigma_x(t)\xi(t)+\sigma_{x_\delta}(t)\xi(t-\delta)+\big[\sigma_x^\theta(t)-\sigma_x(t)\big](x^\varepsilon(t)-x(t))\\
            &\quad+\big[\sigma_{x_\delta}^\theta(t)-\sigma_{x_\delta}(t)\big](x^\varepsilon(t-\delta)-x(t-\delta))\Big\}dB(t),\ t\geq0,\\
     \xi(t)=&\ 0,\ t\in[-\delta,0],
\end{aligned}\right.\end{equation}
\begin{equation}\left\{\begin{aligned}\label{SDDEs-2}
    d\eta(t)=&\Big\{b_x(t)\eta(t)+b_{x_\delta}(t)\eta(t-\delta)+\Delta b_x(t)(x^\varepsilon(t)-x(t))\\
             &\quad+\Delta b_{x_\delta}(t)(x^\varepsilon(t-\delta)-x(t-\delta))+\tilde{b}_{xx}(t)\big[|x^{\varepsilon}(t)-x(t)|^2-|x_1(t)|^2\big]\\
             &\quad+\big[\tilde{b}_{xx}(t)-\frac{1}{2}b_{xx}(t)\big]|x_1(t)|^2+\big[2\tilde{b}_{xx_\delta}(t)-b_{xx_\delta}(t)\big]x_1(t)x_1(t-\delta)\\
             &\quad+\big[\tilde{b}_{x_\delta x_\delta}(t)-\frac{1}{2}b_{x_\delta x_\delta}(t)\big]|x_1(t-\delta)|^2\\
             &\quad+\tilde{b}_{x_\delta x_\delta}(t)\big[|x^{\varepsilon}(t-\delta)-x(t-\delta)|^2-|x_1(t-\delta)|^2\big]\\
             &\quad+2\tilde{b}_{xx_\delta}(t)\big[(x^\varepsilon(t)-x(t))(x^\varepsilon(t-\delta)-x(t-\delta))-x_1(t)x_1(t-\delta)\big]\Big\}dt\\
             &+\Big\{\sigma_x(t)\eta(t)+\sigma_{x_\delta}(t)\eta(t-\delta)+\Delta\sigma_x(t)\xi(t)+\Delta\sigma_{x_\delta}(t)\xi(t-\delta)\\
             &\quad+\tilde{\sigma}_{xx}(t)\big[|x^\varepsilon(t)-x(t)|^2-|x_1(t)|^2\big]+\big[\tilde{\sigma}_{xx}(t)-\frac{1}{2}\sigma_{xx}(t)\big]|x_1(t)|^2\\
             &\quad+\tilde{\sigma}_{x_\delta x_\delta}(t)\big[|x^\varepsilon(t-\delta)-x(t-\delta)|^2-|x_1(t-\delta)|^2\big]\\
             &\quad+\big[\tilde{\sigma}_{x_\delta x_\delta}(t)-\frac{1}{2}\sigma_{x_\delta x_\delta}(t)\big]|x_1(t-\delta)|^2\\
             &\quad+2\tilde{\sigma}_{xx_\delta}(t)\big[(x^\varepsilon(t)-x(t))(x^\varepsilon(t-\delta)-x(t-\delta))-x_1(t)x_1(t-\delta)\big]\\
             &\quad+\big[2\tilde{\sigma}_{xx_\delta}(t)-\sigma_{xx_\delta}(t)\big]x_1(t)x_1(t-\delta)\Big\}dB(t),\ t \geq 0,\\
     \eta(t)=&\ 0,\ t\in[-\delta,0],
\end{aligned}\right.\end{equation}
where
\begin{equation*}\left\{\begin{aligned}
    b^\theta_x(t)&=\int_0^1 b_x(t,x(t)+\theta(x^\varepsilon(t)-x(t)),x(t-\delta)\\
                 &\qquad\qquad+\theta(x^\varepsilon(t-\delta)-x(t-\delta)),u^\varepsilon(t),u^\varepsilon(t-\delta))d\theta,\\
\tilde{b}_{xx}(t)&=\int_0^1\int_0^1 \theta b_{xx}(t,x(t)+\lambda\theta(x^\varepsilon(t)-x(t)),x(t-\delta)\\
                 &\qquad\qquad+\lambda\theta(x^\varepsilon(t-\delta)-x(t-\delta)),u^\varepsilon(t),u^\varepsilon(t-\delta))d\lambda d\theta,
\end{aligned}\right.\end{equation*}
and $b_{x_\delta}^\theta,\sigma_{x}^\theta,\sigma_{x_\delta}^\theta,\tilde{b}_{x_\delta x_\delta},\tilde{b}_{xx_\delta},\tilde{\sigma}_{xx},\tilde{\sigma}_{x_\delta x_\delta},\tilde{\sigma}_{xx_\delta}$ are similarly defined.

Applying Lemma \ref{lem2.1}, we get the following estimate with the assumption (\textbf{A1}) (iv):
\begin{equation*}\begin{aligned}
        &\mathbb{E}\Big[\sup\limits_{0\leq t\leq T}|\xi(t)|^{2p}\Big]\leq C\mathbb{E}\bigg(\int_0^T\big|(b_x^\theta(t)-b_x(t))(x^\varepsilon(t)-x(t))\\
        &\qquad+(b_{x_\delta}^\theta(t)-b_{x_\delta}(t))(x^\varepsilon(t-\delta)-x(t-\delta))\big|dt\bigg)^{2p}\\
        &\quad+C\mathbb{E}\bigg(\int_0^T\big[|\sigma_x^\theta(t)-\sigma_x(t)|^2|x^\varepsilon(t)-x(t)|^2\\
        &\qquad+|\sigma_{x_\delta}^\theta(t)-\sigma_{x_\delta}(t)|^2|x^\varepsilon(t-\delta)-x(t-\delta)|^2\big]dt\bigg)^p\\
\end{aligned}\end{equation*}
\begin{equation*}\begin{aligned}
    \leq&\ C\bigg\{\mathbb{E}\Big[\sup\limits_{0\leq t\leq T}|x^\varepsilon(t)-x(t)|^{4p}\Big]\bigg\}^{\frac{1}{2}}
         \bigg\{\mathbb{E}\Big[\Big(\int_0^T|b_x^\theta(t)-b_x(t)|dt\Big)^{4p}\\
        &\qquad+\Big(\int_0^T|b_{x_\delta}^\theta(t)-b_{x_\delta}(t)|dt\Big)^{4p}\Big]\bigg\}^{\frac{1}{2}}\\
        &+C\bigg\{\mathbb{E}\Big[\sup\limits_{0\leq t\leq T}|x^\varepsilon(t)-x(t)|^{4p}\Big]\bigg\}^{\frac{1}{2}}
         \bigg\{\mathbb{E}\Big[\Big(\int_0^T|\sigma_x^\theta(t)-\sigma_x(t)|^2dt\Big)^{2p}\\
        &\qquad+\Big(\int_0^T|\sigma_{x_\delta}^\theta(t)-\sigma_{x_\delta}(t)|^2dt\Big)^{2p}\Big]\bigg\}^{\frac{1}{2}}=o(\varepsilon^p).
\end{aligned}\end{equation*}
That is, (\ref{estimate-4}) holds. Finally, noting the boundedness of all the partial derivatives, we have
\begin{equation*}\begin{aligned}
    &\mathbb{E}\bigg(\int_0^T|\Delta b_x(t)(x^\varepsilon(t)-x(t))|dt\bigg)^p\\
\leq&\ \mathbb{E}\bigg[\sup\limits_{0\leq t\leq T}|x^\varepsilon(t)-x(t)|^p\Big(\int_0^T|\Delta b_x(t)|dt\Big)^p\bigg]=o(\varepsilon^p).
\end{aligned}\end{equation*}
For the same reason, we obtain
\begin{equation*}\begin{aligned}
    &\mathbb{E}\bigg(\int_0^T|\Delta b_{x_\delta}(t)(x^\varepsilon(t-\delta)-x(t-\delta))|dt\bigg)^p=o(\varepsilon^p),\\
    &\mathbb{E}\bigg(\int_0^T|\Delta\sigma_{x}(t)\xi(t)|^2dt\bigg)^{\frac{p}{2}}=o(\varepsilon^p),\quad
    \mathbb{E}\bigg(\int_0^T|\Delta\sigma_{x_\delta}(t)\xi(t-\delta)|^2dt\bigg)^{\frac{p}{2}}=o(\varepsilon^p).
\end{aligned}\end{equation*}
Using (\ref{estimate-1}) and (\ref{estimate-2}), we get
\begin{equation*}\begin{aligned}
         &\mathbb{E}\bigg(\int_0^T\tilde{b}_{xx}(t)\big[|x^\varepsilon(t)-x(t)|^2-|x_1(t)|^2\big]dt\bigg)^p\\
    \leq &\ C\mathbb{E}\bigg(\int_0^T\xi(t)\big[x^\varepsilon(t)-x(t)+x_1(t))\big]dt\bigg)^p\\
    \leq &\ C\mathbb{E}\bigg\{\sup\limits_{0\leq t\leq T}|\xi(t)|^p\bigg[\sup\limits_{0\leq t\leq T}|x^\varepsilon(t)-x(t)|^p+\sup\limits_{0\leq t\leq T}|x_1(t)|^p\bigg]\bigg\}
        =o(\varepsilon^p).
\end{aligned}\end{equation*}
In the same way, we have
\begin{equation*}\begin{aligned}
    &\mathbb{E}\bigg(\int_0^T\tilde{b}_{x_\delta x_\delta}(t)\big[|x^\varepsilon(t-\delta)-x(t-\delta)|^2-|x_1(t-\delta)|^2\big]dt\bigg)^p=o(\varepsilon^p),\\
    &\mathbb{E}\bigg(\int_0^T|\tilde{\sigma}_{xx}(t)|^2\big[|x^\varepsilon(t)-x(t)|^2-|x_1(t)|^2\big]^2dt\bigg)^{\frac{p}{2}}=o(\varepsilon^p),\\
    &\mathbb{E}\bigg(\int_0^T|\tilde{\sigma}_{x_\delta x_{\delta}}(t)|^2\big[|x^\varepsilon(t-\delta)-x(t-\delta)|^2-|x_1(t-\delta)|^2\big]^2dt\bigg)^{\frac{p}{2}}=o(\varepsilon^p).
\end{aligned}\end{equation*}
By (\ref{estimate-1}), (\ref{estimate-2}) and (\ref{estimate-4}), we have
\begin{equation*}\begin{aligned}
         &\mathbb{E}\bigg(\int_0^T\tilde{b}_{xx_{\delta}}(t)\big[(x^\varepsilon(t)-x(t))(x^\varepsilon(t-\delta)-x(t-\delta))-x_1(t)x_1(t-\delta)\big]dt\bigg)^p\\
    \leq &\ C\mathbb{E}\bigg(\int_0^T\big[\xi(t)(x^\varepsilon(t-\delta)-x(t-\delta))+x_1(t)\xi(t-\delta)\big]dt\bigg)^p\\
    \leq &\ C\mathbb{E}\bigg[\sup\limits_{0\leq t\leq T}|\xi(t)|^p\sup\limits_{-\delta\leq t\leq T}|x^\varepsilon(t)-x(t)|^p\bigg]
          +C\mathbb{E}\bigg[\sup\limits_{0\leq t\leq T}|x_1(t)|^p\sup\limits_{-\delta\leq t\leq T}|\xi(t)|^p\bigg]=o(\varepsilon^p),
\end{aligned}\end{equation*}
and similarly
\begin{equation*}
    \mathbb{E}\bigg(\int_0^T|\tilde{\sigma}_{xx_{\delta}}(t)|^2\big[(x^\varepsilon(t)-x(t))
    (x^\varepsilon(t-\delta)-x(t-\delta))-x_1(t)x_1(t-\delta)\big]^2dt\bigg)^{\frac{p}{2}}=o(\varepsilon^p).
\end{equation*}
By the continuity of $b_{xx}$, we deduce
\begin{equation*}\begin{aligned}
         &\mathbb{E}\bigg(\int_0^T\big[\tilde{b}_{xx}(t)-\frac{1}{2}b_{xx}(t)\big]|x_1(t)|^2dt\bigg)^p\\
    \leq &\ \mathbb{E}\bigg[\sup\limits_{0\leq t\leq T}|x_1(t)|^{2p}\Big(\int_0^T\big[\tilde{b}_{xx}(t)-\frac{1}{2}b_{xx}(t)\big]dt\Big)^p\bigg]=o(\varepsilon^{p}).
\end{aligned}\end{equation*}
Using the same method, we get
\begin{equation*}\begin{aligned}
    &\mathbb{E}\bigg(\int_0^T\big[2\tilde{b}_{xx_\delta}(t)-b_{xx_\delta}(t)\big]x_1(t)x_1(t-\delta)dt\bigg)^p=o(\varepsilon^p),\\
    &\mathbb{E}\bigg(\int_0^T\big[\tilde{b}_{x_\delta x_\delta}(t)-\frac{1}{2}b_{x_\delta x_\delta}(t)\big]|x_1(t-\delta)|^2dt\bigg)^p=o(\varepsilon^{p}),\\
    &\mathbb{E}\bigg(\int_0^T|\tilde{\sigma}_{xx}(t)-\frac{1}{2}\sigma_{xx}(t)|^2|x_1(t)|^4dt\bigg)^{\frac{p}{2}}=o(\varepsilon^p),\\
    &\mathbb{E}\bigg(\int_0^T|\tilde{\sigma}_{x_\delta x_\delta}(t)-\frac{1}{2}\sigma_{x_\delta x_\delta}(t)|^2|x_1(t-\delta)|^4dt\bigg)^{\frac{p}{2}}=o(\varepsilon^p),\\
    &\mathbb{E}\bigg(\int_0^T|2\tilde{\sigma}_{xx_\delta}(t)-\sigma_{xx_\delta}(t)|^2|x_1(t)|^2|x_1(t-\delta)^2dt\bigg)^{\frac{p}{2}}=o(\varepsilon^p).
\end{aligned}\end{equation*}
According to all above estimates, by Lemma \ref{lem2.1}, the proof of (\ref{estimate-5}) is completed.
\end{proof}

Based on Lemma \ref{lem3.1}, from
\begin{equation*}
J(v(\cdot))-J(u(\cdot))\geq0,\quad \mbox{for all }v(\cdot))\in\mathcal{U}_{ad},
\end{equation*}
we can obtain the variational inequality. The detail is omitted.
\begin{mylem}\label{lem3.2}
Let assumption \textbf{(A1)} hold. Suppose $(u(\cdot),x(\cdot))$ is an optimal pair, $x^\varepsilon(\cdot)$ is the trajectory corresponding to  $u^\varepsilon(\cdot)$ by (\ref{perturbed control}), then the following variational inequality holds:
\begin{equation}\label{variational inequality}\begin{aligned}
   &\mathbb{E}\bigg\{\int_0^T\Big[\Delta l(t)+l_x(t)(x_1(t)+x_2(t))+l_{x_\delta}(t)(x_1(t-\delta)+x_2(t-\delta))\\
   &\qquad+\frac{1}{2}l_{xx}(t)|x_1(t)|^2+\frac{1}{2}l_{x_\delta x_\delta}(t)|x_1(t-\delta)|^2+l_{xx_\delta}(t)x_1(t)x_1(t-\delta)\Big]dt\\
   &\qquad+h_x(x(T))(x_1(T)+x_2(T))+\frac{1}{2}h_{xx}(x(T))|x_1(T)|^2\bigg\}\geq o(\varepsilon).
\end{aligned}\end{equation}
where $\Delta l,l_x,l_{x_\delta},l_{xx},l_{x_\delta x_\delta},l_{xx_\delta},h_x,h_{xx}$ are defined similarly as before.
\end{mylem}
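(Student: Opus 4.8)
The plan is to start from the optimality of $u(\cdot)$, which gives $J(u^\varepsilon(\cdot))-J(u(\cdot))\ge 0$ for the spike variation (\ref{perturbed control}), and then to carry out a second-order expansion of both the running cost and the terminal cost, discarding every remainder of order $o(\varepsilon)$ with the help of the estimates (\ref{estimate-1})--(\ref{estimate-5}). Writing $\Theta^\varepsilon(t)=(x^\varepsilon(t),x^\varepsilon(t-\delta),u^\varepsilon(t),u^\varepsilon(t-\delta))$, I would first isolate the control perturbation by splitting the integrand difference as
\begin{equation*}
l(t,\Theta^\varepsilon(t))-l(t,\Theta(t))=\big[l(t,\Theta^\varepsilon(t))-l(t,x(t),x(t-\delta),u^\varepsilon(t),u^\varepsilon(t-\delta))\big]+\Delta l(t),
\end{equation*}
so that $\Delta l(t)$, which is supported on the set $E_\varepsilon$ of measure $2\varepsilon$, survives as an $O(\varepsilon)$ leading term, while the first bracket only records the change of the state arguments at the frozen perturbed control.

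Next I would Taylor-expand the first bracket to second order in $(x,x_\delta)$ around the optimal trajectory, producing the first-order terms $l_x^\varepsilon(t)(x^\varepsilon(t)-x(t))+l_{x_\delta}^\varepsilon(t)(x^\varepsilon(t-\delta)-x(t-\delta))$ and the second-order terms carrying $l_{xx}^\varepsilon,l_{x_\delta x_\delta}^\varepsilon$ and the cross derivative $l_{xx_\delta}^\varepsilon$, all evaluated at the perturbed control. Two reductions are then performed exactly as in the proof of Lemma \ref{lem3.1}. First, each perturbed-control derivative is replaced by the corresponding optimal-control one; the error terms carry a factor $\Delta l_x,\Delta l_{xx},\ldots$ supported on $E_\varepsilon$, and since these are bounded while $x^\varepsilon(\cdot)-x(\cdot)$ has size $\varepsilon^{1/2}$ by (\ref{estimate-1}), the first-order errors are $O(\varepsilon^{3/2})$ and the quadratic errors $O(\varepsilon^2)$, both $o(\varepsilon)$. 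Second, in the first-order terms I replace $x^\varepsilon(\cdot)-x(\cdot)$ by $x_1(\cdot)+x_2(\cdot)$, the residual $\eta(\cdot)=x^\varepsilon(\cdot)-x(\cdot)-x_1(\cdot)-x_2(\cdot)$ contributing $o(\varepsilon)$ by (\ref{estimate-5}); in the quadratic terms I replace $(x^\varepsilon-x)^2$ by $|x_1|^2$ and $(x^\varepsilon(t)-x(t))(x^\varepsilon(t-\delta)-x(t-\delta))$ by $x_1(t)x_1(t-\delta)$, where each difference factors through $\xi(\cdot)=x^\varepsilon(\cdot)-x(\cdot)-x_1(\cdot)$ times a quantity of size $\varepsilon^{1/2}$ and is therefore $o(\varepsilon)$ by Cauchy--Schwarz together with (\ref{estimate-1}), (\ref{estimate-2}) and (\ref{estimate-4}).

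The terminal cost is handled identically: a second-order Taylor expansion of $h$ at $x(T)$ gives $h_x(x(T))(x^\varepsilon(T)-x(T))+\tfrac12 h_{xx}(x(T))(x^\varepsilon(T)-x(T))^2$ up to a remainder that is $o(|x^\varepsilon(T)-x(T)|^2)$ and hence $o(\varepsilon)$; replacing $x^\varepsilon(T)-x(T)$ by $x_1(T)+x_2(T)$ in the linear part and $(x^\varepsilon(T)-x(T))^2$ by $|x_1(T)|^2$ in the quadratic part, again at the cost of $o(\varepsilon)$, produces the two boundary terms in (\ref{variational inequality}). Collecting all surviving leading-order terms and invoking $J(u^\varepsilon)-J(u)\ge0$ yields the asserted inequality.

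I anticipate the main obstacle to be the order bookkeeping of the delay-induced cross terms. In contrast to the classical delay-free case, the quadratic part now contains the genuinely mixed product $(x^\varepsilon(t)-x(t))(x^\varepsilon(t-\delta)-x(t-\delta))$, and showing that its difference from $x_1(t)x_1(t-\delta)$ is $o(\varepsilon)$ requires the factorization $\xi(t)(x^\varepsilon(t-\delta)-x(t-\delta))+x_1(t)\xi(t-\delta)$ and then pairing the $o(\varepsilon^{1/2})$-estimate (\ref{estimate-4}) for $\xi$ at one time slot with the $O(\varepsilon^{1/2})$-estimates for $x^\varepsilon-x$ and $x_1$ at the shifted slot, via Cauchy--Schwarz, so that no term of exact order $\varepsilon$ is inadvertently dropped or retained. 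Everything else is a routine, if lengthy, repetition of the estimates already established for the state equation in Lemma \ref{lem3.1}.
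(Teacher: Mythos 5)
Your proposal is correct and is exactly the standard spike-variation expansion that the paper intends here: the paper itself only says ``from $J(v(\cdot))-J(u(\cdot))\ge 0$ we can obtain the variational inequality'' and omits the details, and your decomposition of the cost difference, the replacement of $x^\varepsilon-x$ by $x_1+x_2$ (resp. of the quadratic and cross products by $|x_1|^2$ and $x_1(t)x_1(t-\delta)$) with remainders controlled by (\ref{estimate-1})--(\ref{estimate-5}), mirrors precisely the estimates already carried out for $b$ and $\sigma$ in the proof of Lemma \ref{lem3.1}. No gap.
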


\section{Global maximum principle}

Based on the results in the previous section, in this section we prove the global maximum principle, for our stochastic optimal control problem with delay. Suppose $u(\cdot)$ is the optimal control and $u^\varepsilon(\cdot)$ is the perturbed admissible control.

We introduce the following three adjoint equations:
\begin{eqnarray}\left\{\begin{aligned}\label{adjoint equations-1}
-dp(t)=&\Big\{b_x(t)p(t)+\sigma_x(t)q(t)+l_x(t)+\mathbb{E}^{\mathcal{F}_t}\big[b_{x_\delta}(t+\delta)p(t+\delta)\\
       &\quad+\sigma_{x_\delta}(t+\delta)q(t+\delta)+l_{x_\delta}(t+\delta)\big]\Big\}dt-q(t)dB(t),\ t\in[0,T],\\
  p(T)=&\ h_x(x(T)),\ p(t)=0,\ t\in(T,T+\delta],\ q(t)=0,\ t\in[T,T+\delta].
\end{aligned}\right.\end{eqnarray}
\begin{eqnarray}\left\{\begin{aligned}\label{adjoint equations-2}
-dP(t)=&\Big\{2b_x(t)P(t)+|\sigma_x(t)|^2P(t)+2\sigma_x(t)Q(t)+b_{xx}(t)p(t)+\sigma_{xx}(t)q(t)+l_{xx}(t)\\
       &\quad+\mathbb{E}^{\mathcal{F}_t}\big[|\sigma_{x_\delta}(t+\delta)|^2P(t+\delta)+b_{x_\delta x_\delta}(t+\delta)p(t+\delta)
        +\sigma_{x_\delta x_\delta}(t+\delta)q(t+\delta)\\
       &\quad+l_{x_\delta x_\delta}(t+\delta)\big]\Big\}dt-Q(t)dB(t),\ t\in[0,T],\\
  P(T)=&\ h_{xx}(x(T)),\ P(t)=0,\ t\in(T,T+\delta],\ Q(t)=0,\ t\in[T,T+\delta],
\end{aligned}\right.\end{eqnarray}
\begin{equation}\left\{\begin{aligned}\label{BRDE}
-dK(t)&=\Big\{b_{x_\delta}(t)P(t)+\sigma_x(t)\sigma_{x_\delta}(t)P(t)+\sigma_{x_\delta}(t)Q(t)+b_{xx_\delta}(t)p(t)\\
      &\qquad+\sigma_{xx_\delta}(t)q(t)+l_{xx_\delta}(t)\Big\}dt,\ t\in[0,T],\\
  K(t)&=0,\ t\in[T,T+\delta].
\end{aligned}\right.\end{equation}

In the above, comparing (\ref{adjoint equations-1}) and (\ref{adjoint equations-2}) with (19) and (20) in Peng \cite{Peng90}, we see that time-advanced or time-anticipated terms such as $\mathbb{E}^{\mathcal{F}_t}\big[b_{x_\delta}(t+\delta)p(t+\delta)]$, etc., appear in the generators of them. Thus, (\ref{adjoint equations-1}) and (\ref{adjoint equations-2}) are both ABSDEs. Since all the partial derivatives are uniformly bounded, by Proposition \ref{pro2.3}, (\ref{adjoint equations-1}) and (\ref{adjoint equations-2}) admit unique solution pairs $(p(\cdot),q(\cdot))\in\mathcal{S}_\mathcal{F}^2([0,T+\delta];\mathbf{R})\times L_\mathcal{F}^2([0,T+\delta];\mathbf{R})$ and $(P(\cdot),Q(\cdot))\in\mathcal{S}_\mathcal{F}^2([0,T+\delta];\mathbf{R})\times L_\mathcal{F}^2([0,T+\delta];\mathbf{R})$, respectively.

\begin{Remark}\label{rem4.1}
We point out that, (\ref{adjoint equations-1}) and (\ref{adjoint equations-2}) are introduced to deal with terms like $x_1(\cdot)+x_2(\cdot)$ and $|x_1(\cdot)|^2$ in the variational equations (\ref{variational equation-1}) and (\ref{variational equation-2}), respectively. This is quite similar as Peng \cite{Peng90} for the global maximum principle of stochastic optimal control problem without delay. However, for the stochastic optimal control problem with delay, when deriving the global maximum principle, we have to deal with the cross terms like $x_1(\cdot)x_1(\cdot-\delta)$ in the variational equation (\ref{variational equation-2}). Thus, the additional adjoint process $K(\cdot)$ satisfying the BRDE (\ref{BRDE}), is introduced, which is motivated by (2.8) of \O ksendal and Sulem \cite{OS00}. It plays an important role in the derivation of our main theorem.
\end{Remark}

The following task is to give the global maximum principle. Since $x_1(t)=0,\ t\in[-\delta,0]$ and $p(t)=q(t)=0,\ t\in(T,T+\delta]$, we have
\begin{equation*}\begin{aligned}
    &\mathbb{E}\int_0^Tx_1(t-\delta)\big[b_{x_\delta}(t)p(t)+\sigma_{x_\delta}(t)q(t)\big]dt\\
   =&\ \mathbb{E}\int_{-\delta}^{T-\delta}x_1(t)\big[b_{x_\delta}(t+\delta)p(t+\delta)+\sigma_{x_\delta}(t+\delta)q(t+\delta)\big]dt\\
   =&\ \mathbb{E}\int_0^Tx_1(t)\big[b_{x_\delta}(t+\delta)p(t+\delta)+\sigma_{x_\delta}(t+\delta)q(t+\delta)\big]dt\\
    &-\mathbb{E}\int_{T-\delta}^{T}x_1(t)\big[b_{x_\delta}(t+\delta)p(t+\delta)+\sigma_{x_\delta}(t+\delta)q(t+\delta)\big]dt\\
   =&\ \mathbb{E}\int_0^Tx_1(t)\big[b_{x_\delta}(t+\delta)p(t+\delta)+\sigma_{x_\delta}(t+\delta)q(t+\delta)\big]dt\\
    &-\mathbb{E}\int_T^{T+\delta}x_1(t-\delta)\big[b_{x_\delta}(t)p(t)+\sigma_{x_\delta}(t)q(t)\big]dt\\
   =&\ \mathbb{E}\int_0^Tx_1(t)E^{\mathcal{F}_t}[b_{x_\delta}(t+\delta)p(t+\delta)+\sigma_{x_\delta}(t+\delta)q(t+\delta)]dt.
\end{aligned}\end{equation*}
Similarly, suppose $l_{x_\delta}(t)=0$ for $t\in(T,T+\delta]$, then we have
\begin{equation*}
   \mathbb{E}\int_0^Tl_{x_\delta}(t)\big[x_1(t-\delta)+x_2(t-\delta)\big]dt=\mathbb{E}\int_0^T\mathbb{E}^{\mathcal{F}_t}\big[l_{x_\delta}(t+\delta)\big][x_1(t)+x_2(t)]dt.
\end{equation*}

Applying It\^o's formula to $p(\cdot)(x_1(\cdot)+x_2(\cdot))+\frac{1}{2}P(\cdot)|x_1(\cdot)|^2$ and substituting it into (\ref{variational inequality}), we obtain
\begin{equation}\begin{aligned}\label{variational inequality---}
   &\mathbb{E}\bigg\{\int_0^T\Big[\Delta l(t)+p(t)\Delta b(t)+q(t)\Delta\sigma(t)+\frac{1}{2}P(t)|\Delta\sigma(t)|^2+x_1(t-\delta)\big[q(t)\Delta\sigma_{x_\delta}(t)\\
   &\qquad+P(t)\sigma_{x_\delta}(t)\Delta\sigma(t)\big]+x_1(t)\big[q(t)\Delta\sigma_x(t)+P(t)\Delta b(t)+P(t)\sigma_x(t)\Delta\sigma(t)\\
   &\qquad+Q(t)\Delta\sigma(t)\big]+x_1(t)x_1(t-\delta)\big[l_{xx_\delta}(t)+b_{x_\delta}(t)P(t)+\sigma_x(t)\sigma_{x_\delta}(t)P(t)\\
   &\qquad+\sigma_{x_\delta}(t)Q(t)+b_{xx_\delta}(t)p(t)+\sigma_{xx_\delta}(t)q(t)]\Big]dt\bigg\}\geq o(\varepsilon).
\end{aligned}\end{equation}
Applying again It\^o's formula to $K(\cdot)x_1(\cdot)$, we get for $r\in[\delta,T]$,
\begin{equation}\begin{aligned}\label{K(t)-1}
  &K(r)x_1(r)-K(\delta)x_1(\delta)\\
 =&\int_\delta^r\Big\{-x_1(t)\big[b_{x_\delta}(t)P(t)+\sigma_x(t)\sigma_{x_\delta}(t)P(t)+\sigma_{x_\delta}(t)Q(t)+b_{xx_\delta}(t)p(t)\\
  &\quad+\sigma_{xx_\delta}(t)q(t)+l_{xx_\delta}(t)\big]+K(t)\big[b_x(t)x_1(t)+b_{x_{\delta}}(t)x_1(t-\delta)+\Delta b(t)\big]\Big\}dt\\
  &\ +\int_\delta^rK(t)\big[\sigma_x(t)x_1(t)+\sigma_{x_{\delta}}(t)x_1(t-\delta)+\Delta\sigma(t)\big]dB(t)\\
 =&\int_0^{r-\delta}\Big\{-x_1(t+\delta)\big[b_{x_\delta}(t+\delta)P(t+\delta)+\sigma_x(t+\delta)\sigma_{x_\delta}(t+\delta)P(t+\delta)\\
  &\quad+\sigma_{x_\delta}(t+\delta)Q(t+\delta)+b_{xx_\delta}(t+\delta)p(t+\delta)+\sigma_{xx_\delta}(t+\delta)q(t+\delta)+l_{xx_\delta}(t+\delta)\big]\\
  &\quad+K(t+\delta)\big[b_x(t+\delta)x_1(t+\delta)+b_{x_{\delta}}(t+\delta)x_1(t)+\Delta b(t+\delta)\big]\Big\}dt\\
  &\ +\int_0^{r-\delta}K(t+\delta)\big[\sigma_x(t+\delta)x_1(t+\delta)+\sigma_{x_{\delta}}(t+\delta)x_1(t)+\Delta\sigma(t+\delta)\big]dB(t+\delta).
\end{aligned}\end{equation}
If $K(t)=0$ for all $t\in[0,T]$, then (\ref{K(t)-1}) can be rewritten as
\begin{equation}\begin{aligned}\label{K(t)-2}
  &K(r-\delta)x_1(r-\delta)-K(0)x_1(0)\\
  =&-\int_0^{r-\delta}x_1(t+\delta)\big[b_{x_\delta}(t+\delta)P(t+\delta)+\sigma_x(t+\delta)\sigma_{x_\delta}(t+\delta)P(t+\delta)+\sigma_{x_\delta}(t+\delta)Q(t+\delta)\\
    &\qquad+b_{xx_\delta}(t+\delta)p(t+\delta)+\sigma_{xx_\delta}(t+\delta)q(t+\delta)+l_{xx_\delta}(t+\delta)\big]dt,\ \delta\leq r\leq T.
\end{aligned}\end{equation}
Noting (\ref{variational equation-1}), we have
\begin{equation}\begin{aligned}\label{eq}
  x_1(r-\delta)&=\int_0^{r-\delta}\big[b_x(t)x_1(t)+b_{x_{\delta}}(t)x_1(t-\delta)+\Delta b(t)\big]dt\\
               &\quad+\int_0^{r-\delta}\big[\sigma_x(t)x_1(t)+\sigma_{x_\delta}(t)x_1(t-\delta)+\Delta\sigma(t)\big]dB(t).
\end{aligned}\end{equation}
By (\ref{K(t)-2}), (\ref{eq}), applying It\^o's formula to $\big[K(\cdot)x_1(\cdot)\big]x_1(\cdot)$ and putting $K(t)=0,t\in[0,T]$ into it, we get
\begin{equation*}\begin{aligned}
  0=&-\int_0^{r-\delta}x_1(t)x_1(t+\delta)\big[b_{x_\delta}(t+\delta)P(t+\delta)+\sigma_x(t+\delta)\sigma_{x_\delta}(t+\delta)P(t+\delta)\\
    &\quad+\sigma_{x_\delta}(t+\delta)Q(t+\delta)+b_{xx_\delta}(t+\delta)p(t+\delta)+\sigma_{xx_\delta}(t+\delta)q(t+\delta)+l_{xx_\delta}(t+\delta)\big]dt.
\end{aligned}\end{equation*}

Now choose $r=T$ and recall $x_1(t-\delta)=0$ for $t\in[0,\delta]$, we can derive
\begin{equation}\begin{aligned}\label{eqeq}
  0=&-\int_0^Tx_1(t)x_1(t-\delta)\big[b_{x_\delta}(t)P(t)+\sigma_x(t)\sigma_{x_\delta}(t)P(t)+\sigma_{x_\delta}(t)Q(t)\\
    &\qquad+b_{xx_\delta}(t)p(t)+\sigma_{xx_\delta}(t)q(t)+l_{xx_\delta}(t)\big]dt.
\end{aligned}\end{equation}
Substituting (\ref{eqeq}) into (\ref{variational inequality---}), we obtain
\begin{equation}\begin{aligned}\label{eqeqeq}
   &\mathbb{E}\int_0^T\Big\{\Delta l(t)+p(t)\Delta b(t)+q(t)\Delta\sigma(t)+\frac{1}{2}P(t)|\Delta\sigma(t)|^2\\
   &\qquad+x_1(t-\delta)\big[q(t)\Delta\sigma_{x_\delta}(t)+P(t)\sigma_{x_\delta}(t)\Delta\sigma(t)\big]\\
   &\qquad+x_1(t)\big[q(t)\Delta\sigma_x(t)+P(t)\Delta b(t)+P(t)\sigma_x(t)\Delta\sigma(t)+Q(t)\Delta\sigma(t)\big]\Big\}dt\geq o(\varepsilon).
 \end{aligned}\end{equation}
Next we try to prove that
\begin{equation}\begin{aligned}\label{eqeqeqeq}
   &\mathbb{E}\int_0^T\Big\{x_1(t-\delta)\big[q(t)\Delta\sigma_{x_\delta}(t)+P(t)\sigma_{x_\delta}(t)\Delta\sigma(t)\big]+x_1(t)\big[q(t)\Delta\sigma_x(t)+P(t)\Delta b(t)\\
   &\qquad+P(t)\sigma_x(t)\Delta\sigma(t)+Q(t)\Delta\sigma(t)\big]\Big\}dt=o(\varepsilon).
 \end{aligned}\end{equation}
In fact, by the boundedness of $\sigma_x$, we have
\begin{equation}\begin{aligned}\label{estimate-6}
      &\mathbb{E}\int_0^T|x_1(t)q(t)\Delta\sigma_x(t)|dt
       \leq \mathbb{E}\bigg[\sup\limits_{0\leq t\leq T}|x_1(t)|\Big(\int_{E_\varepsilon}|q(t)|^2dt\Big)^{\frac{1}{2}}
       \Big(\int_{E_\varepsilon}|\Delta\sigma_x(t)|^2dt\Big)^{\frac{1}{2}}\bigg]\\
  \leq&\ C\varepsilon^{\frac{1}{2}}\bigg\{\mathbb{E}\Big[\sup\limits_{0\leq t\leq T}|x_1(t)|^2\Big]\bigg\}^{\frac{1}{2}}
       \bigg\{\mathbb{E}\int_{E_\varepsilon}|q(t)|^2dt\bigg\}^{\frac{1}{2}}=o(\varepsilon).
\end{aligned}\end{equation}
Recall the assumption \textbf{(A1)} (ii) and the estimate of the solution to SDDE (\ref{SDDE}), we obtain
\begin{equation}\begin{aligned}\label{estimate-7}
      &\mathbb{E}\int_0^T\big|x_1(t)P(t)\Delta b(t)\big|dt
       \leq \mathbb{E}\bigg[\sup\limits_{0\leq t\leq T}|x_1(t)|\Big(\int_{E_\varepsilon}|P(t)|^2dt\Big)^{\frac{1}{2}}
       \Big(\int_{E_\varepsilon}|\Delta b(t)|^2dt\Big)^{\frac{1}{2}}\bigg]\\
  \leq&\ \bigg\{\mathbb{E}\Big[\sup\limits_{0\leq t\leq T}|x_1(t)|^2\Big]\bigg\}^{\frac{1}{2}}
       \bigg\{\mathbb{E}\bigg[\int_{E_\varepsilon}|P(t)|^2dt\int_{E_\varepsilon}C\Big(1+|x(t)|^2+|x(t-\delta)|^2\\
      &\qquad+|x^\varepsilon(t)|^2+|x^\varepsilon(t-\delta)|^2\Big)dt\bigg]\bigg\}^{\frac{1}{2}}=o(\varepsilon).
\end{aligned}\end{equation}
The other terms of (\ref{eqeqeqeq}) can be verified in a similar method. Hence we can simplify (\ref{eqeqeq}) to
\begin{equation}\begin{aligned}\label{variational inequality--}
   \mathbb{E}\int_0^T\Big[\Delta l(t)+p(t)\Delta b(t)+q(t)\Delta\sigma(t)+\frac{1}{2}P(t)|\Delta\sigma(t)|^2\Big]dt\geq o(\varepsilon).
\end{aligned}\end{equation}

\begin{Remark}\label{rem4.2}
We emphasize here again that, to obtain the estimate (\ref{variational inequality--}), the assumption $K(t)\equiv 0,t\in[0,T]$ plays an important role in this process, especially in (\ref{eqeqeq}) and (\ref{eqeqeqeq}). Thus the additional adjoint process $K(\cdot)$ satisfying the BRDE (\ref{BRDE}) is by no means trivial.
\end{Remark}

Now, we define the Hamilton function $H:[0,T]\times\mathbf{R}\times\mathbf{R}\times\mathbf{R}\times\mathbf{R}\times\mathbf{R}\times\mathbf{R}\times\mathbf{R}\rightarrow\mathbf{R}$ as
\begin{equation}\begin{aligned}\label{Hamilton}
   &H(\tau,x,x_\delta,v,v_\delta,p,q,P)\\
  =&\ l(\tau,x,x_\delta,v,v_\delta)+pb(\tau,x,x_\delta,v,v_\delta)+q\sigma(\tau,x,x_\delta,v,v_\delta)+\frac{1}{2}P|\sigma(\tau,x,x_\delta,v,v_\delta)|^2\\
   &-P\sigma(\tau,\Theta(\tau))\sigma(\tau,x,x_\delta,v,v_\delta),
\end{aligned}\end{equation}
where $\Theta(\tau)\equiv(x(\tau),x(\tau-\delta),u(\tau),u(\tau-\delta))$ and $(u(\cdot),x(\cdot))$ is the optimal pair. Then, we obtain the following main result in this paper.

\begin{mythm}\label{thm4.1}
Let assumption \textbf{(A1)} hold. Suppose $(u(\cdot),x(\cdot))$ is the optimal pair. Let $l_{x_\delta}(t)=l_{x_\delta x_\delta}(t)=0$ hold for $t\in(T,T+\delta]$. Suppose $(p(\cdot),q(\cdot))\in\mathcal{S}_\mathcal{F}^2([0,T];\mathbf{R})\times L_\mathcal{F}^2([0,T];\mathbf{R})$ and $(P(\cdot),Q(\cdot))\in \mathcal{S}_\mathcal{F}^2([0,T];\mathbf{R})\times L_\mathcal{F}^2([0,T];\mathbf{R})$ satisfy the ABSDEs (\ref{adjoint equations-1}) and (\ref{adjoint equations-2}), respectively. Besides, suppose $K(\cdot)$ satisfies the BRDE (\ref{BRDE}) with $K(t)=0$ for all $t\in[0,T]$. Then the following maximum condition holds:
\begin{equation}\begin{aligned}\label{main result}
      &H(\tau,x(\tau),x(\tau-\delta),v,u(\tau-\delta),p(\tau),q(\tau),P(\tau))\\
      &+\mathbb{E}^{\mathcal{F}_\tau}\big[H(\tau+\delta,x(\tau+\delta),x(\tau),u(\tau+\delta),v,p(\tau+\delta),q(\tau+\delta),P(\tau+\delta))\big]I_{[0,T-\delta)}(\tau)\\
 \geq &\ H(\tau,\Theta(\tau),p(\tau),q(\tau),P(\tau))\\
      &+\mathbb{E}^{\mathcal{F}_\tau}\big[H(\tau+\delta,\Theta(\tau+\delta),p(\tau+\delta),q(\tau+\delta),P(\tau+\delta))\big]I_{[0,T-\delta)}(\tau),\\
      &\qquad\qquad\qquad\qquad \forall v\in\textbf{U},\ a.e.\ \tau\in[0,T],\ \mathbb{P}\mbox{-}a.s.,
\end{aligned}\end{equation}
where $H$ is the Hamiltonian function defined by (\ref{Hamilton}).
\end{mythm}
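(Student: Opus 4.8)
The plan is to start from the reduced variational inequality (\ref{variational inequality--}), into which the troublesome cross terms have already been absorbed through the process $K(\cdot)$, and to convert it into the pointwise maximum condition (\ref{main result}) by a localized spike-variation / Lebesgue-point argument. The first step is to recognize that (\ref{variational inequality--}) is nothing but an integrated Hamiltonian increment. Writing $\sigma^\varepsilon(t),b^\varepsilon(t),l^\varepsilon(t)$ for the coefficients evaluated along $(t,x(t),x(t-\delta),u^\varepsilon(t),u^\varepsilon(t-\delta))$, so that $\Delta\sigma(t)=\sigma^\varepsilon(t)-\sigma(t,\Theta(t))$, a direct computation using the definition (\ref{Hamilton}) of $H$ — in particular its extra cross term $-P\sigma(\tau,\Theta(\tau))\sigma$ — gives the algebraic identity
\begin{equation*}
H\big(t,x(t),x(t-\delta),u^\varepsilon(t),u^\varepsilon(t-\delta),p(t),q(t),P(t)\big)-H\big(t,\Theta(t),p(t),q(t),P(t)\big)=\Delta l(t)+p(t)\Delta b(t)+q(t)\Delta\sigma(t)+\tfrac12P(t)|\Delta\sigma(t)|^2,
\end{equation*}
since $\tfrac12P(|\sigma^\varepsilon|^2-|\sigma(\Theta)|^2)-P\sigma(\Theta)(\sigma^\varepsilon-\sigma(\Theta))=\tfrac12P|\Delta\sigma|^2$. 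Hence (\ref{variational inequality--}) reads $\mathbb{E}\int_0^T[H^\varepsilon(t)-H^{\mathrm{opt}}(t)]\,dt\geq o(\varepsilon)$, where the integrand vanishes off the set $E_\varepsilon=[\tau,\tau+\varepsilon]\cup[\tau+\delta,\tau+\delta+\varepsilon]$ introduced in the proof of Lemma \ref{lem3.1}.

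Second, I would split the integral over the two disjoint subintervals of $E_\varepsilon$. On $[\tau,\tau+\varepsilon]$ only the current control is perturbed, so $u^\varepsilon(t)=v$ and $u^\varepsilon(t-\delta)=u(t-\delta)$; on $[\tau+\delta,\tau+\delta+\varepsilon]$ only the delayed control is perturbed, so $u^\varepsilon(t)=u(t)$ and $u^\varepsilon(t-\delta)=v$. The second integral contributes only when $\tau+\delta\leq T$, i.e.\ $\tau\in[0,T-\delta)$, which is precisely the origin of the indicator $I_{[0,T-\delta)}(\tau)$ in (\ref{main result}). Performing the shift $t\mapsto t-\delta$ in this second integral rewrites it with the time-advanced data $(\tau+\delta,x(\tau+\delta),x(\tau),u(\tau+\delta),v,p(\tau+\delta),q(\tau+\delta),P(\tau+\delta))$, matching exactly the arguments of the advanced Hamiltonian in (\ref{main result}).

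Third, to upgrade from an expectation inequality to the pointwise $\mathbb{P}$-a.s.\ statement, I would localize the spike: for a fixed $v\in\mathbf{U}$ and an arbitrary $A\in\mathcal{F}_\tau$, set $u^\varepsilon$ equal to $v$ on $[\tau,\tau+\varepsilon]$ only for $\omega\in A$ (and equal to $u$ otherwise). Since $\mathbf{1}_A$ is $\mathcal{F}_\tau$- and hence $\mathcal{F}_{\tau+\delta}$-measurable, the entire derivation carries an extra factor $\mathbf{1}_A$, yielding
\begin{equation*}
\mathbb{E}\Big[\mathbf{1}_A\Big(\textstyle\int_\tau^{\tau+\varepsilon}\big(H^{v}-H^{\mathrm{opt}}\big)(t)\,dt+I_{[0,T-\delta)}(\tau)\int_\tau^{\tau+\varepsilon}\big(\widehat H^{v}-H^{\mathrm{opt}}\big)(s+\delta)\,ds\Big)\Big]\geq o(\varepsilon).
\end{equation*}
Dividing by $\varepsilon$, letting $\varepsilon\downarrow0$ at a Lebesgue point $\tau$ of the ($L^1$) integrands, and using the arbitrariness of $A$, I obtain the conditional-expectation inequality. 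Finally, because $x(\tau),x(\tau-\delta),u(\tau-\delta),p(\tau),q(\tau),P(\tau)$ are all $\mathcal{F}_\tau$-measurable and $v$ is deterministic, the first-interval Hamiltonian terms pull out of $\mathbb{E}^{\mathcal{F}_\tau}[\cdot]$ while the (deterministic) indicator commutes with it, which delivers precisely (\ref{main result}).

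The main obstacle I expect is the bookkeeping of the delay in the spike variation: correctly recognizing that a single spike at $\tau$ produces two disjoint active intervals, executing the time shift so that the second one becomes a time-advanced Hamiltonian at $\tau+\delta$ with $v$ occupying the delayed-control slot, and tracking the boundary truncation that forces the indicator $I_{[0,T-\delta)}(\tau)$. Closely tied to this is the justification that it is the conditional expectation $\mathbb{E}^{\mathcal{F}_\tau}$, rather than a plain expectation, that appears; this rests on the $\mathcal{F}_\tau$-measurability of all first-interval data together with the localization over $A\in\mathcal{F}_\tau$ and the Lebesgue differentiation step.
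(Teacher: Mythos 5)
Your proposal is correct and follows essentially the same route as the paper: both start from the reduced variational inequality (\ref{variational inequality--}), identify its integrand with the Hamiltonian increment via the identity $\tfrac12P(|\sigma^\varepsilon|^2-|\sigma(\Theta)|^2)-P\sigma(\Theta)\Delta\sigma=\tfrac12P|\Delta\sigma|^2$, split the spike's two active intervals $[\tau,\tau+\varepsilon]$ and $[\tau+\delta,\tau+\delta+\varepsilon]$ (the latter producing the indicator $I_{[0,T-\delta)}(\tau)$ after a shift), divide by $\varepsilon$, and pass to the conditional-expectation form by choosing $v(\tau)=vI_A+u(\tau)I_{A^c}$ with $A\in\mathcal{F}_\tau$ arbitrary. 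Your write-up is in fact somewhat more explicit than the paper's on the Lebesgue-point justification and the algebraic identity behind the quadratic term in $H$.
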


\begin{proof}
Let $x^\varepsilon(\cdot)$ be the state trajectory corresponding to $u^\varepsilon(\cdot)$ defined in (\ref{perturbed control}). Under the assumptions of Theorem \ref{thm4.1}, we can get (\ref{variational inequality--}). Noting $u^\varepsilon(t)\neq u(t)$ for $t\in[\tau,\tau+\varepsilon]$ and $u^\varepsilon(t-\delta)\neq u(t-\delta)$ for $t\in[\tau+\delta,\tau+\delta+\varepsilon]$, dividing both sides of (\ref{variational inequality--}) simultaneously by $\varepsilon$, we obtain
\begin{equation*}\begin{aligned}
\mathbb{E}&\Big\{l(\tau,x(\tau),x(\tau-\delta),v(\tau),u(\tau-\delta))-l(\tau,\Theta(\tau))\\
          &+p(\tau)\big[b(\tau,x(\tau),x(\tau-\delta),v(\tau),u(\tau-\delta))-b(\tau,\Theta(\tau))\big]\\
          &+q(\tau)\big[\sigma(\tau,x(\tau),x(\tau-\delta),v(\tau),u(\tau-\delta))-\sigma(\tau,\Theta(\tau))\big]\\
          &+\frac{1}{2}P(\tau)\big[|\sigma(\tau,x(\tau),x(\tau-\delta),v(\tau),u(\tau-\delta))|^2-|\sigma(\tau,\Theta(\tau))|^2\big]\\
          &-P(\tau)\sigma(\tau,\Theta(\tau))\big[\sigma(\tau,x(\tau),x(\tau-\delta),v(\tau),u(\tau-\delta))-\sigma(\tau,\Theta(\tau))\big]\Big\}\\
+\mathbb{E}&\Big\{l(\tau+\delta,x(\tau+\delta),x(\tau),u(\tau+\delta),v(\tau))-l(\tau+\delta,\Theta(\tau+\delta))\\
           &+p(\tau+\delta)[b(\tau+\delta,x(\tau+\delta),x(\tau),u(\tau+\delta),v(\tau))-b(\tau+\delta,\Theta(\tau+\delta))]\\
           &+q(\tau+\delta)[\sigma(\tau+\delta,x(\tau+\delta),x(\tau),u(\tau+\delta),v(\tau))-\sigma(\tau+\delta,\Theta(\tau+\delta))]\\
           &+\frac{1}{2}P(\tau+\delta)[|\sigma(\tau+\delta,x(\tau+\delta),x(\tau),u(\tau+\delta),v(\tau))|^2-|\sigma(\tau+\delta,\Theta(\tau+\delta))|^2]\\
           &-P(\tau+\delta)\sigma(\tau+\delta,\Theta(\tau+\delta))\big[\sigma(\tau+\delta,x(\tau+\delta),x(\tau),u(\tau+\delta),v(\tau))\\
           &\quad-\sigma(\tau+\delta,\Theta(\tau+\delta))]\Big\}I_{[0,T-\delta)}(\tau)\geq 0,\quad a.e.\ \tau\in[0,T].
\end{aligned}\end{equation*}
Choose $v(\tau)=vI_A+u(\tau)I_{A^c}$, $A\in\mathcal{F}_\tau$, $v\in\mathbf{U}$, then we get
\begin{equation*}\begin{aligned}
                    \mathbb{E}&\Big\{l(\tau,x(\tau),x(\tau-\delta),v,u(\tau-\delta))-l(\tau,\Theta(\tau))\\
                              &+p(\tau)\big[b(\tau,x(\tau),x(\tau-\delta),v,u(\tau-\delta))-b(\tau,\Theta(\tau))\big]\\
                              &+q(\tau)\big[\sigma(\tau,x(\tau),x(\tau-\delta),v,u(\tau-\delta))-\sigma(\tau,\Theta(\tau))\big]\\
                              &+\frac{1}{2}P(\tau)\big[|\sigma(\tau,x(\tau),x(\tau-\delta),v,u(\tau-\delta))|^2-\sigma^2(\tau,\Theta(\tau))\big]\\
                              &-P(\tau)\sigma(\tau,\Theta(\tau))\big[\sigma(\tau,x(\tau),x(\tau-\delta),v,u(\tau-\delta))-\sigma(\tau,\Theta(\tau))\big]\Big\}\\
+\mathbb{E}^{\mathcal{F}_\tau}&\Big\{l(\tau+\delta,x(\tau+\delta),x(\tau),u(\tau+\delta),v)-l(\tau+\delta,\Theta(\tau+\delta))\\
                              &+p(\tau+\delta)\big[b(\tau+\delta,x(\tau+\delta),x(\tau),u(\tau+\delta),v)-b(\tau+\delta,\Theta(\tau+\delta))\big]\\
                              &+q(\tau+\delta)\big[\sigma(\tau+\delta,x(\tau+\delta),x(\tau),u(\tau+\delta),v)-|\sigma(\tau+\delta,\Theta(\tau+\delta))|^2\big]\\
                              &+\frac{1}{2}P(\tau+\delta)\big[|\sigma(\tau+\delta,x(\tau+\delta),x(\tau),u(\tau+\delta),v)|^2-|\sigma(\tau+\delta,\Theta(\tau+\delta))|^2\big]\\
                              &-P(\tau+\delta)\sigma(\tau+\delta,\Theta(\tau+\delta))\big[\sigma(\tau+\delta,x(\tau+\delta),x(\tau),u(\tau+\delta),v)\\
                              &\quad-\sigma(\tau+\delta,\Theta(\tau+\delta))\big]\Big\}I_{[0,T-\delta)}(\tau)\geq 0,\quad a.e.\ \tau\in[0,T],\ \mathbb{P}\mbox{-}a.s.
\end{aligned}\end{equation*}
Hence (\ref{main result}) holds. The proof is complete.
\end{proof}

\begin{Remark}\label{rem4.3}
(1) Comparing with the classical global maximum principle by Peng \cite{Peng90}, the maximum condition (\ref{main result}) has an indicator function, which is the characteristic of the stochastic optimal control problem with delay.

(2) Noting in the above theorem, it is rigorous that $l_{x_\delta}(t)=l_{x_\delta x_\delta}(t)=0$ holds for $t\in(T,T+\delta]$. However, in \cite{CW10}, they assume that $l$ doesn't contain the state delay term $x_\delta$ (See equation (4) on page 1075 of \cite{CW10}). Thus in this paper we fill a gap.

(3) One may find that the Hamilton function $H$ defined in (\ref{Hamilton}) does not contain the adjoint variables $Q(\cdot)$ and $K(\cdot)$ in an explicit form. In fact, it implicitly depends on $K(\cdot)$, since the value of $P(\cdot)$ is relative to that of $K(\cdot)$ which could be easily seen from the derivation of the main result.
\end{Remark}

\section{Multi-dimensional case}

In this section, we simply present the corresponding results for the multi-dimensional case, without proof whose details are left to the interested readers.  Let $B(\cdot)=(B^1(\cdot),\cdots,B^d(\cdot))^\top$ and $v(\cdot)=(v^1(\cdot),\cdots,v^k(\cdot))^\top$. Consider the optimal control problem with the state equation (\ref{controlled SDDE}) and the cost functional (\ref{cost functional}), where
\begin{equation*}\left\{\begin{aligned}
  &b(t,x,x_\delta,v,v_\delta)=\left(
  \begin{array}{c}
  b^1(t,x,x_\delta,v,v_\delta)\\
  \vdots\\
  b^n(t,x,x_\delta,v,v_\delta)\\
  \end{array}
\right),\\
  &\sigma(t,x,x_\delta,v,v_\delta)=(\sigma^1(t,x,x_\delta,v,v_\delta),\cdots,\sigma^d(t,x,x_\delta,v,v_\delta)),\\
  &\sigma^j(t,x,x_\delta,v,v_\delta)=\left(
  \begin{array}{c}
  \sigma^{1j}(t,x,x_\delta,v,v_\delta)\\
  \vdots\\
  \sigma^{nj}(t,x,x_\delta,v,v_\delta)\\
  \end{array}
\right),\ 1\leq j\leq d.
\end{aligned}\right.\end{equation*}

The corresponding variational equations (\ref{variational equation-1}), (\ref{variational equation-2}) are as follows:
\begin{eqnarray*}\left\{\begin{aligned}
    dx_1(t)=&\big[b_x(t)x_1(t)+b_{x_\delta}(t)x_1(t-\delta)+\Delta b(t)\big]dt\\
            &+\big[\sigma_x(t)x_1(t)+\sigma_{x_\delta}(t)x_1(t-\delta)+\Delta\sigma(t)\big]dB(t),\ t\geq0,\\
     x_1(t)=&\ 0,\ -\delta\leq t\leq 0,
\end{aligned}\right.\end{eqnarray*}
\begin{eqnarray*}\left\{\begin{aligned}
    dx_2(t)=&\big[b_x(t)x_2(t)+b_{x_{\delta}}(t)x_2(t-\delta)+\frac{1}{2}b_{xx}(t)x_1(t)x_1(t)\\
            &\quad+\frac{1}{2}b_{x_\delta x_\delta}(t)x_1(t-\delta)x_1(t-\delta)+b_{xx_\delta}(t)x_1(t)x_1(t-\delta)\big]dt\\
            &+\big[\sigma_x(t)x_2(t)+\sigma_{x_\delta}(t)x_2(t-\delta)+\frac{1}{2}\sigma_{xx}(t)x_1(t)x_1(t)\\
            &\quad+\frac{1}{2}\sigma_{x_\delta x_\delta}(t)x_1(t-\delta)x_1(t-\delta)+\sigma_{xx_\delta}(t)x_1(t)x_1(t-\delta))\\
            &\quad+\Delta\sigma_x(t)x_1(t)+\Delta\sigma_{x_\delta}(t)x_1(t-\delta)\big]dB(t),\ t\geq 0,\\
     x_2(t)=&\ 0,\ -\delta\leq t\leq 0,
\end{aligned}\right.\end{eqnarray*}
here we use the following notions:
\begin{equation*}\left\{\begin{aligned}
  &b_{xx}(t)x_1(t)x_1(t)\equiv\left(
  \begin{array}{c}
  \mbox{tr}\big[b^1_{xx}(t)x_1(t)x_1(t)^\top\big]\\
  \vdots\\
  \mbox{tr}\big[b^n_{xx}(t)x_1(t)x_1(t)^\top\big]\\
  \end{array}
\right),\\
  &\sigma^j_{xx}(t)x_1(t)x_1(t)\equiv\left(
  \begin{array}{c}
  \mbox{tr}\big[\sigma^{1j}_{xx}(t)x_1(t)x_1(t)^\top\big]\\
  \vdots\\
  \mbox{tr}\big[\sigma^{nj}_{xx}(t)x_1(t)x_1(t)^\top\big]\\
  \end{array}
\right),\ 1\leq j\leq d.
\end{aligned}\right.\end{equation*}
And, the corresponding adjoint equations (\ref{adjoint equations-1}), (\ref{adjoint equations-2}) and (\ref{BRDE}) are as follows:
\begin{equation}\left\{\begin{aligned}\label{adjoint equations-1-multi}
-dp(t)=&\bigg\{b_x(t)^\top p(t)+\sum_{j=1}^d(\sigma_x^j(t))^\top q(t)+l_x(t)+\mathbb{E}^{\mathcal{F}_t}\bigg[b_{x_\delta}(t+\delta)^\top p(t+\delta)\\
       &\ +\sum_{j=1}^d(\sigma_{x_\delta}^j(t+\delta))^\top q(t+\delta)+l_{x_\delta}(t+\delta)\bigg]\bigg\}dt-q(t)dB(t),\ t\in[0,T],\\
  p(T)=&\ h_x(x(T)),\ p(t)=0,\ t\in(T,T+\delta],\ q(t)=0,\ t\in[T,T+\delta].
\end{aligned}\right.\end{equation}
\begin{equation}\left\{\begin{aligned}\label{adjoint equations-2-multi}
-dP(t)=&\bigg\{b_x(t)^\top P(t)+P(t)b_x(t)+\sum_{j=1}^d(\sigma_x^j(t))^\top P(t)\sigma_x^j(t)+\sum_{j=1}^d(\sigma_x^j(t))^\top Q^j(t)\\
       &\ +\sum_{j=1}^dQ^j(t)\sigma_x^j(t)+\big\langle p(t),b_{xx}(t)\big\rangle+\mbox{tr}\big[q(t)^\top\sigma_{xx}(t)\big]+l_{xx}(t)\\
       &\ +\mathbb{E}^{\mathcal{F}_t}\bigg[\sum_{j=1}^d(\sigma_{x_\delta}^j(t+\delta))^\top P(t+\delta)\sigma_{x_\delta}^j(t+\delta)
        +\big\langle p(t+\delta),b_{x_\delta x_\delta}(t+\delta)\big\rangle\\
       &\ +\mbox{tr}\big[q(t+\delta)^\top\sigma_{x_\delta x_\delta}(t+\delta)\big]+l_{x_\delta x_\delta}(t+\delta)\bigg]\bigg\}dt-\sum_{j=1}^dQ^j(t)dB^j(t),\ t\in[0,T],\\
  P(T)=&\ h_{xx}(x(T)),\ P(t)=0,\ t\in(T,T+\delta],\ Q(t)=0,\ t\in[T,T+\delta],
\end{aligned}\right.\end{equation}
\begin{equation}\left\{\begin{aligned}\label{BRDE-multi}
-dK(t)=&\bigg\{\frac{1}{2}b_{x_\delta}(t)^\top\big[P(t)+P(t)^\top\big]+\frac{1}{2}\sum_{j=1}^d(\sigma_{x_\delta}^j(t))^\top\big[P(t)+P(t)^\top\big]\sigma_{x}^j(t)\\
       &\ +\frac{1}{2}\sum_{j=1}^d(\sigma_{x_\delta}^j(t))^\top\big[Q^j(t)+(Q^j(t))^\top\big]+\big\langle p(t),b_{xx_\delta}(t)\big\rangle\\
       &\ +\mbox{tr}\big[q(t)^\top\sigma_{xx_\delta}(t)\big]+l_{xx_\delta}(t)\bigg\}dt,\ t\in[0,T],\\
  K(t)=&\ 0,\ t\in[T,T+\delta].
\end{aligned}\right.\end{equation}
Similarly, we can obtain the variational inequality:
\begin{equation}\begin{aligned}\label{variational inequality---multi}
   \mathbb{E}\int_0^T\Big\{\Delta l(t)+\langle p(t),\Delta b(t)\rangle+\mbox{tr}[q(t)^\top\Delta\sigma(t)]
    +\frac{1}{2}\mbox{tr}\big[(\Delta\sigma(t))^\top P(t)(\Delta\sigma(t))\big]\Big\}dt\geq o(\varepsilon).
\end{aligned}\end{equation}
Now, we define the Hamilton function $H:[0,T]\times\mathbf{R}^n\times\mathbf{R}^n\times\mathbf{R}^k\times\mathbf{R}^k\times\mathbf{R}^n\times\mathbf{R}^{n\times d}\times\mathbf{R}^{n\times n}\rightarrow\mathbf{R}$ as
\begin{equation}\label{Hamilton-multi}\begin{aligned}
   &H(\tau,x,x_\delta,v,v_\delta,p,q,P)\\
  =&\ l(\tau,x,x_\delta,v,v_\delta)+\big\langle p,b(\tau,x,x_\delta,v,v_\delta)\big\rangle+\mbox{tr}\big[q^\top\sigma(\tau,x,x_\delta,v,v_\delta)\big]\\
   &+\frac{1}{2}\mbox{tr}\Big\{\big[\sigma(\tau,x,x_\delta,v,v_\delta)-\sigma(\tau,x(\tau),x(\tau-\delta),u(\tau),u(\tau-\delta))\big]^\top P\\
   &\qquad\quad\times\big[\sigma(\tau,x,x_\delta,v,v_\delta)-\sigma(\tau,x(\tau),x(\tau-\delta),u(\tau),u(\tau-\delta))\big]\Big\}.
\end{aligned}\end{equation}
The following theorem is a multidimensional version of Theorem \ref{thm4.1}.
\begin{mythm}\label{thm5.1}
Let assumption \textbf{(A1)} hold. Suppose $(u(\cdot),x(\cdot))$ is the optimal pair. Let $l_{x_\delta}(t)=l_{x_\delta x_\delta}(t)=0$ hold for $t\in(T,T+\delta]$. Suppose $(p(\cdot),q(\cdot))\in\mathcal{S}_\mathcal{F}^2([0,T];\mathbf{R}^n)\times L_\mathcal{F}^2([0,T];\mathbf{R}^{n\times d})$ and $(P(\cdot),Q(\cdot))\in \mathcal{S}_\mathcal{F}^2([0,T];\mathbf{R}^{n\times n})\times(L_\mathcal{F}^2([0,T];\mathbf{R}^{n\times n})^d$ satisfy (\ref{adjoint equations-1-multi}) and (\ref{adjoint equations-2-multi}), respectively. Besides, suppose $K(\cdot)$ satisfies the BRDE (\ref{BRDE-multi}) with $K(t)=0$ for all $t\in[0,T]$. Then the following maximum condition holds:
\begin{equation}\label{main result-multi}\begin{aligned}
      &H(\tau,x(\tau),x(\tau-\delta),v,u(\tau-\delta),p(\tau),q(\tau),P(\tau))\\
      &+\mathbb{E}^{\mathcal{F}_\tau}\big[H(\tau+\delta,x(\tau+\delta),x(\tau),u(\tau+\delta),v,p(\tau+\delta),q(\tau+\delta),P(\tau+\delta))\big]I_{[0,T-\delta)}(\tau)\\
 \geq &\ H(\tau,x(\tau),x(\tau-\delta),u(\tau),u(\tau-\delta),p(\tau),q(\tau),P(\tau))\\
      &+\mathbb{E}^{\mathcal{F}_\tau}\big[H(\tau+\delta,x(\tau+\delta),x(\tau),u(\tau+\delta),u(\tau),p(\tau+\delta),q(\tau+\delta),P(\tau+\delta))\big]I_{[0,T-\delta)}(\tau),\\
      &\qquad\qquad\qquad\qquad \forall v\in\textbf{U},\ a.e.\ \tau\in[0,T],\ \mathbb{P}\mbox{-}a.s.
\end{aligned}\end{equation}
\end{mythm}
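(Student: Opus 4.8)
The plan is to reproduce, in matrix/vector form, the three-step argument used for Theorem \ref{thm4.1}. First I would record the multi-dimensional analogues of Lemma \ref{lem3.1} and Lemma \ref{lem3.2}: the spike-variation estimates and the variational inequality go through verbatim once $|\cdot|$ is read as the Euclidean norm and the scalar expressions $l_x x_1$, $\tfrac{1}{2}l_{xx}|x_1|^2$, $l_{xx_\delta}x_1 x_1(t-\delta)$ are replaced by $\langle l_x,x_1\rangle$, $\tfrac{1}{2}\langle l_{xx}x_1,x_1\rangle$, $\langle l_{xx_\delta}x_1(t-\delta),x_1\rangle$, and so on. No new phenomenon arises here, so this stage can be stated and its proof omitted.

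The core of the argument is the duality computation. I would apply It\^o's formula to $\langle p(t),x_1(t)+x_2(t)\rangle+\tfrac{1}{2}\langle P(t)x_1(t),x_1(t)\rangle$ and substitute into the multi-dimensional variational inequality. Inserting the generators of the ABSDEs (\ref{adjoint equations-1-multi}) and (\ref{adjoint equations-2-multi}), and applying to every delayed term the time-shift/conditional-expectation identity established just before Theorem \ref{thm4.1}, the coefficients multiplying $x_1(t)+x_2(t)$ and the quadratic form $\langle P(t)x_1(t),x_1(t)\rangle$ cancel, leaving only the spike remainders together with a residual cross term built from the non-symmetric matrix $x_1(t)x_1(t-\delta)^\top$. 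To eliminate this residual I would introduce $K(\cdot)$ through the BRDE (\ref{BRDE-multi}), apply It\^o's formula first to $K(t)x_1(t)$ and then to $\langle K(t)x_1(t),x_1(t)\rangle$ --- the exact analogues of (\ref{K(t)-1})--(\ref{eqeq}) --- and impose $K\equiv 0$ on $[0,T]$, which forces the residual cross term to vanish. Finally, I would check that the first-order terms pairing $x_1$ against the spike quantities $\Delta b,\Delta\sigma,\Delta\sigma_x,\Delta\sigma_{x_\delta}$ are $o(\varepsilon)$, exactly as in (\ref{estimate-6})--(\ref{estimate-7}), thereby arriving at (\ref{variational inequality---multi}).

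From (\ref{variational inequality---multi}) the maximum condition follows by the spike-variation limit. Dividing by $\varepsilon$ and letting $\varepsilon\to0$, the perturbation on $[\tau,\tau+\varepsilon]$ produces contributions both at time $\tau$ (through the current control) and at time $\tau+\delta$ (through the delayed control), the latter carrying the factor $I_{[0,T-\delta)}(\tau)$. Choosing $v(\tau)=vI_A+u(\tau)I_{A^c}$ with $A\in\mathcal{F}_\tau$ and $v\in\mathbf{U}$, and recognising the two groups of terms as the increments of the Hamiltonian (\ref{Hamilton-multi}) from $u(\tau)$ to $v$ at times $\tau$ and $\tau+\delta$, gives precisely (\ref{main result-multi}).

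The step I expect to be the main obstacle is the matrix bookkeeping in the duality computation. Because $x_1(t)x_1(t-\delta)^\top$ is not symmetric while the quadratic form $\langle Px_1,x_1\rangle$ only feels the symmetric part of its coefficient, the residual cross term couples to both $P$ and $P^\top$, and to both $Q^j$ and $(Q^j)^\top$ through the It\^o correction summed over the Brownian components. This is exactly why the generator of the BRDE (\ref{BRDE-multi}) is written with the symmetrised coefficients $\tfrac{1}{2}[P+P^\top]$ and $\tfrac{1}{2}[Q^j+(Q^j)^\top]$. Verifying that this generator reproduces the residual cross-term coefficient term-by-term --- with the correct symmetrisation and the correct $\sum_{j=1}^d$ over Brownian components --- is the delicate calculation on which the whole elimination argument rests.
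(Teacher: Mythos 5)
Your proposal is correct and follows exactly the route the paper intends: the paper omits the proof of Theorem \ref{thm5.1} as a direct multi-dimensional transcription of the argument for Theorem \ref{thm4.1}, and your plan reproduces that argument step by step, using the duality computation with $\langle p,x_1+x_2\rangle+\tfrac{1}{2}\langle Px_1,x_1\rangle$, the time-shift identity for the delayed terms, and the BRDE (\ref{BRDE-multi}) with $K\equiv0$ to eliminate the cross term $x_1(t)x_1(t-\delta)^\top$. Your observation that the non-symmetry of $x_1(t)x_1(t-\delta)^\top$ forces the symmetrised coefficients $\tfrac{1}{2}[P+P^\top]$ and $\tfrac{1}{2}[Q^j+(Q^j)^\top]$ in (\ref{BRDE-multi}) correctly identifies the only genuinely new bookkeeping in the multi-dimensional case.
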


To finish this section, let us consider two special cases.

Case (i). When the coefficients $b,\sigma,l$ do not contain the delay terms. In this case (\ref{main result-multi}) reduces to
\begin{equation}\label{Peng1990}\begin{aligned}
      &H(\tau,x(\tau),v,p(\tau),q(\tau),P(\tau))\\
 \geq &\ H(\tau,x(\tau),u(\tau),p(\tau),q(\tau),P(\tau)),\quad \forall v\in\textbf{U},\ a.e.\ \tau\in[0,T],\ \mathbb{P}\mbox{-}a.s.,
\end{aligned}\end{equation}
where the Hamilton function $H:[0,T]\times\mathbf{R}^n\times\mathbf{R}^k\times\mathbf{R}^n\times\mathbf{R}^{n\times d}\times\mathbf{R}^{n\times n}\rightarrow\mathbf{R}$ is defined as
\begin{equation*}\begin{aligned}
   &H(\tau,x,v,p,q,P)=l(\tau,x,v)+\big\langle p,b(\tau,x,v)\big\rangle+\mbox{tr}\big[q^\top\sigma(\tau,x,v)\big]\\
   &+\frac{1}{2}\mbox{tr}\Big\{\big[\sigma(\tau,x,v)-\sigma(\tau,x(\tau),u(\tau))\big]^\top P\big[\sigma(\tau,x,v)-\sigma(\tau,x(\tau),u(\tau))\big]\Big\}.
\end{aligned}\end{equation*}
This is a well known result in Peng \cite{Peng90} or Yong and Zhou \cite{YZ99}.

Case (ii). The control domain is convex, and the coefficients $b,\sigma,l$ are continuously differentiable with respect to $v$. In this case, from (\ref{main result-multi}) we could obtain
\begin{equation}\label{ChenWu2010}\begin{aligned}
      &\big\langle H_v(\tau)+\mathbb{E}^{\mathcal{F}_\tau}\big[H_{v_\delta}(\tau+\delta)\big]
      I_{[0,T-\delta)}(\tau),v-u(\tau)\big\rangle\geq0,\quad \forall v\in\textbf{U},\ a.e.\ \tau\in[0,T],\ \mathbb{P}\mbox{-}a.s.,
\end{aligned}\end{equation}
where $H_\phi(\tau)\equiv H_\phi(\tau,x(\tau),x(\tau-\delta),u(\tau),u(\tau-\delta),p(\tau),q(\tau),P(\tau))$ for $\phi=v,v_\delta$, $\forall\tau\in[0,T]$, and the Hamilton function $H:[0,T]\times\mathbf{R}^n\times\mathbf{R}^n\times\mathbf{R}^k\times\mathbf{R}^k\times\mathbf{R}^n\times\mathbf{R}^{n\times d}\rightarrow\mathbf{R}$ is defined as
\begin{equation*}\begin{aligned}
   &H(\tau,x,x_\delta,v,v_\delta,p,q)\\
  =&\ l(\tau,x,x_\delta,v,v_\delta)+\big\langle p,b(\tau,x,x_\delta,v,v_\delta)\big\rangle+\mbox{tr}\big[q^\top\sigma(\tau,x,x_\delta,v,v_\delta)\big].
\end{aligned}\end{equation*}
This coincides with the result in Chen and Wu \cite{CW10}.

\section{A solvable LQ example}

In this section, we present an linear-quadratic (LQ for short) example, to illustrate the global maximum principle obtained in the previous section. For simplicity, we consider $n=k=d=1$.

Consider the following linear controlled stochastic system with delay:
\begin{equation}\left\{\begin{aligned}\label{controlled SDDE-LQ}
      dX(t)&=\big[Mv(t)+\bar{M}v(t-\delta)\big]dt+\big[CX(t-\delta)+Dv(t)+\bar{D}v(t-\delta)\big]dB(t),\ t\in[0,T],\\
  X(\theta)&=\varphi(\theta),\ v(\theta)=\eta(\theta),\ \theta\in[-\delta,0],
\end{aligned}\right.\end{equation}
with the quadratic cost functional
\begin{equation}\label{cost functional-LQ}
  J(v(\cdot))=\mathbb{E}\bigg\{\frac{1}{2}\int_0^T\big[N|v(t)|^2+\bar{N}|v(t-\delta)|^2\big]dt+X(T)\bigg\},
\end{equation}
where $M,\bar{M},C,D,\bar{D},N,\bar{N}\in\mathbf{R}$ and $N,\bar{N}>0$.

The admissible control set is defined as $\mathcal{U}_{ad}:=\big\{v(\cdot):[0,T]\times\Omega\rightarrow\mathbf{U}|v(\cdot)$ is an $\mathcal{F}_t$-predictable, square-integrable process\big\}, where $\mathbf{U}=(-\infty,-1]\cup[1,\infty)$ is a nonconvex set. Our object is to find an optimal control $u(\cdot)\in\mathcal{U}_{ad}$ such that (\ref{controlled SDDE-LQ}) is satisfied and (\ref{cost functional-LQ}) is minimized.

The adjoint equations (\ref{adjoint equations-1}), (\ref{adjoint equations-2}) and (\ref{BRDE}) now read:
\begin{equation}\left\{\begin{aligned}\label{eq5.3}
  -dp(t)&=C\mathbb{E}^{\mathcal{F}_t}[q(t+\delta)]dt-q(t)dB(t),\ t\in[0,T],\\
    p(T)&=1,\ q(T)=0,\ p(t)=q(t)=0,\ t\in(T,T+\delta],
\end{aligned}\right.\end{equation}
\begin{equation}\left\{\begin{aligned}\label{eq5.4}
  -dP(t)&=C^2\mathbb{E}^{\mathcal{F}_t}[P(t+\delta)]dt-Q(t)dB(t),\ t\in[0,T],\\
    P(T)&=0,\ Q(T)=0,\ P(t)=Q(t)=0,\ t\in(T,T+\delta],
\end{aligned}\right.\end{equation}
\begin{equation}\left\{\begin{aligned}\label{eq5.5}
  -dK(t)&=CQ(t)dt,\ t\in[0,T],\\
    K(t)&=0,\ t\in[T,T+\delta].
\end{aligned}\right.\end{equation}
which admit $\mathcal{F}_t$-adapted unique solutions $(p(\cdot),q(\cdot)),(P(\cdot),Q(\cdot))$ and $K(\cdot)$, respectively. It is easy to check that for $t\in[0,T+\delta]$,
\begin{equation}\left\{\begin{aligned}\label{pqPQK}
  &p(t)\equiv 1,\ q(t)\equiv 0,\\
  &P(t)\equiv 0,\ Q(t)\equiv 0,\ K(t)\equiv 0.
\end{aligned}\right.\end{equation}
The Hamilton function $H$ of (\ref{Hamilton}) is now reduced to:
\begin{equation}\begin{aligned}\label{Hamilton-LQ}
  &H(\tau,x,x_\delta,v,v_\delta,p,q,P)\\
  &=Nv^2+\bar{N}v^2_\delta+p(Mv+\bar{M}v_\delta)+q(Cx_\delta+Dv+\bar{D}v_\delta)+\frac{1}{2}P(Cx_\delta+Dv+\bar{D}v_\delta)^2\\
  &\quad-P\big[Cx(\tau-\delta)+Du(\tau)+\bar{D}u(\tau-\delta)\big](Cx_\delta+Dv+\bar{D}v_\delta),
\end{aligned}\end{equation}
where $(u(\cdot),x(\cdot))$ is the optimal pair. According to Theorem \ref{thm4.1}, we obtain
\begin{equation}\begin{aligned}\label{main result-LQ}
  &\big[N+\bar{N}I_{[0,T-\delta)}(\tau)\big]v^2+\big[M+\bar{M}I_{[0,T-\delta)}(\tau)\big]v\\
  &\geq\big[N+\bar{N}I_{[0,T-\delta)}(\tau)\big]|u(\tau)|^2+\big[M+\bar{M}I_{[0,T-\delta)}(\tau)\big]u(\tau),\ \forall v\in\mathbf{U},\ a.e.\ \tau\in[0,T],\ \mathbb{P}\mbox{-}a.s.
\end{aligned}\end{equation}
From (\ref{main result-LQ}), it is easy to obtain that the optimal control is of the form as
\begin{equation}\label{optimal control-LQ}
  u(\tau)=-\frac{M+\bar{M}I_{[0,T-\delta)}(\tau)}{2\big[N+\bar{N}I_{[0,T-\delta)}(\tau)\big]},\ a.e.\ \tau\in[0,T].
\end{equation}
Especially, if we let $M=\bar{M}=2$, $N=\bar{N}=1$, then from (\ref{optimal control-LQ}) we know that $u(\tau)\equiv-1,\ a.e.\ \tau\in[0,T]$. Noting $\mathbf{U}=(-\infty,-1]\cup[1,\infty)$ is nonconvex, this optimal control can not be given by the results in the previous literatures.

\section{Concluding remarks}

In this paper, inspired by \cite{OS00} and \cite{CW10}, we have proved the global maximum principle for the stochastic optimal control problem with delay, where the control domain is nonconvex and the diffusion term contains both control and its delayed terms. We have introduced a new pair of adjoint variables $(P(\cdot),Q(\cdot))$ satisfying a second-order ABSDE, and another new adjoint variable $K(\cdot)$ satisfying a BRDE. Especially $K(\cdot)$ is introduced mainly to deal with the cross terms of $x_1(\cdot)$ and $x_1(\cdot-\delta)$ (see Remark \ref{rem4.1}). Comparing with the classical stochastic maximum principle without delay, the maximum condition contains an indicator function. In fact it is the characteristic of the stochastic optimal control problem with delay. Furthermore, it is interesting to find that the Hamilton function $H$ does not explicitly contain the new adjoint variable $K(\cdot)$, and it is relative to the adjoint variables $p(\cdot)$, $q(\cdot)$ and $P(\cdot)$. An LQ example is given to illustrate the main results obtained in this paper.

Possible extension to obtain the global maximum principles for controlled forward-backward stochastic systems or recursive utilities (Hu \cite{Hu17}) with delay (Chen and Wu \cite{CW09}, Chen and Huang \cite{CH15}, Huang, Li and Shi \cite{HLS12}, Shi, Xu and Zhang \cite{SXZ15}), is quite worth to study. Stochastic differential games with delay (Chen and Yu \cite{CY15}, Paman \cite{Pa15}, Xu et al. \cite{XSZ18}) and applications, are also hot research topics recently. We wish to deal with these problems in our future research.


\end{document}